\numberwithin{equation}{section}
\def\H{{\cal H}}
\def\R{\mathbb{R}}
\def\Z{\mathbb{Z}}
\def\H1{H^1(\R)}
\newtheorem{thm}{Theorem}
\newtheorem{lem}{Lemma}
\newtheorem{prop}{Proposition}
\newtheorem{defn}{Definition}
\newtheorem{remark}{Remark}
\newcommand{\Extend}[5]{\ext@arrow0099{\arrowfill@#1#2#3}{#4}{#5}}
\begin{document}

\setcounter{page}{1}

\title[Solitary wave solutions for DNLS]{Instability of the solitary wave solutions for the genenalized derivative Nonlinear Schr\"odinger equation in the critical frequency case}

\author{Zihua Guo}
\address{School of Mathematical Sciences, Monash University, VIC 3800, Australia}
\email{zihua.guo@monash.edu}
\thanks{}

\author{Cui Ning}
\address{School of Mathematics, South China University of Technology, Guangzhou, Guangdong 510640, P.R.China}
\email{cuiningmath@gmail.com}

\thanks{Z. Guo is supported by ARC DP170101060.}

\author{Yifei Wu}
\address{Center for Applied Mathematics, Tianjin University,
Tianjin 300072, P.R.China}
\email{yerfmath@gmail.com}
\thanks{}



\keywords{derivative gDNLS, orbital instability, solitary wave solutions}

\maketitle

\begin{abstract}\noindent
  We study the stability theory of solitary wave solutions for the generalized derivative nonlinear Schr\"odinger equation
  $$
  i\partial_{t}u+\partial_{x}^{2}u+i|u|^{2\sigma}\partial_x u=0.
  $$
  The equation has a two-parameter family of solitary wave solutions of the form
\begin{align*}
\phi_{\omega,c}(x)=\varphi_{\omega,c}(x)\exp{\big\{ i\frac c2 x-\frac{i}{2\sigma+2}\int_{-\infty}^{x}\varphi^{2\sigma}_{\omega,c}(y)dy\big\}}.
\end{align*}
Here $ \varphi_{\omega,c}$ is some real-valued function.
It was proved in \cite{LiSiSu1} that the solitary wave solutions are stable if $-2\sqrt{\omega }<c <2z_0\sqrt{\omega }$, and unstable if $2z_0\sqrt{\omega }<c <2\sqrt{\omega }$ for some $z_0\in(0,1)$. We prove the instability at the borderline case $c =2z_0\sqrt{\omega }$ for $1<\sigma<2$, improving the previous results in \cite{Fu-16-DNLS} where $3/2<\sigma<2$.
\end{abstract}

\section{Introduction}
In this paper, we study the stability theory of the solitary wave solutions for the genenalized derivative nonlinear Schr\"odinger equation:
\begin{equation}\label{eqs:gDNLS}
   \aligned
    &i\partial_{t}u+\partial_{x}^{2}u+i|u|^{2\sigma}\partial_x u=0,\qquad t\in \R,\ x\in \R
    \\
    \endaligned\\
\end{equation}
for $\sigma >0$. It describes an Alfv\'{e}n wave and appears in plasma physics, nonlinear optics, and so on (see \cite{MOMT-PHY, M-PHY}).
In the case of $\sigma=1$, by a suitable gauge transformation, \eqref{eqs:gDNLS} is transformed to the standard derivative nonlinear Schr\"odinger equation:
\begin{align}\label{DNLS0}
i\partial_tu+\partial_x^2u+i\partial_x(|u|^2u)=0.
\end{align}

This equation \eqref{DNLS0} was widely studied.
The local well-posedness was proved by Hayashi and Ozawa \cite{HaOz-92-DNLS, HaOz-94-DNLS} in the energy space $H^1(\R)$ and  by Guo and Tan \cite{GuTa91} in the smooth space.
In the paper of \cite{HaOz-92-DNLS}, the authors proved the global well-posedness in the energy space when the initial data $u_0$ satisfies the mass condition $\|u_0\|_{L^2}<\sqrt{2\pi}$. This condition seems natural for global well-posedness in view of the mass critical nonlinear Schr\"odinger equation and generalized KdV equation, as it ensures a priori estimate of $H^1$-norm from mass and energy conservations. However, recently, the third author extended the condition to $\|u_0\|_{L^2}<2\sqrt{\pi}$ in  \cite{Wu1,Wu2}, in which the key ingredient in the proof is the use of the
momentum conservation. A simplified proof was later given by the first and third authors in their paper \cite{Guo-Wu-15-DNLS}, where the global well-posedness in $H^\frac12(\R)$ was also proved under the same mass constraint. The problems for large mass are still unclear at the moment. In \cite{FuHaIn-16-DNLS}, Fukaya, Hayashi and Inui constructed a class of large global solution with high oscillation. In the papers of Cher, Simpson and Sulem \cite{ChSiSu-DNLS}, Jenkins, Liu, Perry, Sulem \cite{JeLiPeSu-1, LiPeSu,LiPeSu1,LiPeSu2}, Pelinovsky and Shimabukuro \cite{PeSh-DNLS,PeSh-DNLS-2}, the authors constructed a class of global solution by using the inverse scattering method. On the long-time behavior and modified scattering
theory, see \cite{GuHaLinNa-DNLS} and references therein.
On the low regularity theory, see \cite{BiLi-01-Illposed-DNLS-BO,CKSTT-01-DNLS, CKSTT-02-DNLS,Gr-05, Grhe-95, GuReWa-DNLS,Herr,Mosincat,MoOh, Miao-Wu-Xu:2011:DNLS, Ta-99-DNLS-LWP, Ta-16-DNLS-LWP} and the reference therein.

In the case of $\sigma\ne 1$, the Cauchy problems of \eqref{eqs:gDNLS} have been investigated by many researchers. In the case of $\sigma>1$, local well-posedness in energy spaces $H^1(\R)$ was studied by Hayashi and Ozawa \cite{HaOz-16-DNLS} for any $\sigma>1$,  by Hao \cite{Hao} in $H^\frac12(\R)$ for any $\sigma>\frac52$, and by Santos \cite{santos}  in $H^\frac12(\R)$ for any $\sigma>1$ and small data. In the case of $\frac12 \le  \sigma<1 $, local well-posedness in energy spaces $H^2(\R)$ was studied by Hayashi and Ozawa \cite{HaOz-16-DNLS}, see also Santos \cite{santos} in the weight Sobolev spaces. In the case of $0< \sigma<\frac12$, local well-posedness in the some weighted spaces was studied by Linares, Ponce and Santos \cite{LiPoSa-Local-DNLS}. Note that in this case, the nonlinear term is not regular enough,  appropriate construction of the working space is needed to handle nonlinearity.  Global well-posedness was studied in \cite{FuHaIn-16-DNLS, HaOz-16-DNLS, Miao-Tang-Xu:2017:DNLS}. In particular, in the case of $0<\sigma<1$ the global existence (without uniqueness) of the solution in $H^1(\R)$ was shown by Hayashi and Ozawa \cite{HaOz-16-DNLS}; while in the case of $\sigma>1$, the global well-posedness of the solution in $H^1(\R)$ was shown by Hayashi and Ozawa \cite{FuHaIn-16-DNLS, Miao-Tang-Xu:2017:DNLS} with some suitable size restriction on the initial datum.

Also, the stability theory was widely studied.  The equation \eqref{eqs:gDNLS} has a two-parameter family of solitary waves,
$$
u_{\omega,c}(t)=e^{i\omega t}\phi_{\omega, c}(x-ct),
$$
where $\phi_{\omega, c}$ is the solution of
\begin{align}
\phi_{\omega,c}(x)=\varphi_{\omega,c}(x)\exp{\big\{\frac c2 i x-\frac{i}{2\sigma+2}\int_{-\infty}^{x}\varphi^{2\sigma}_{\omega,c}(y)dy\big\}},\label{phi}
\end{align}
and
\begin{align*}
\varphi_{\omega,c}(x)=\Big\{\frac{(\sigma+1)(4\omega-c^2)}{2\sqrt\omega \cosh(\sigma\sqrt{4\omega-c^2}\,x)-c}\Big\}^{\frac{1}{2\sigma}}.
\end{align*}
Note that $\phi_{\omega,c}$ is the solution of
\begin{align}\label{Elliptic-comp}
-\partial_x^2\phi+\omega \phi+c i\partial_x\phi-i| \phi|^{2\sigma}\partial_x\phi=0.
\end{align}

When $\sigma= 1$, Colin and Ohta \cite{CoOh-06-DNLS} proved the stability of the soliton waves when $c ^2<4\omega $, see also Guo and Wu \cite{GuWu95} for previous result in the case of $c >0$. The endpoint case $c ^2=4\omega , c >0$ was studied in \cite{Soonsik-W-2014}. Further,
Le Coz and Wu \cite{Stefan-W-15-MultiSoliton-DNLS}, Miao, Tang and Xu \cite{Miao-Tang-Xu:2016:DNLS} proved the stability of the multi-solitary wave solutions. A consequence of these results are a class of the arbitrary large global solutions.
Note that the equation \eqref{phi} can be solved when $4\omega< c^2, c\in\R$ or $4\omega=c^2, c>0$.

In the case of $0<\sigma<1$, Liu, Simpson and Sulem \cite{LiSiSu1} proved that the solitary wave solution $u_{\omega,c}$ is stable for any $-2\sqrt\omega<c<2\sqrt\omega$, Guo \cite{guo-DNLS} further proved the stability of the solitary wave solutions in the endpoint case $0<c=2\sqrt\omega$.  In the case of  $\sigma\ge2$,  the solitary wave solution $u_{\omega,c}$ is unstable for any $-2\sqrt\omega<c<2\sqrt\omega$.

The case $1<\sigma<2$ is more complicated. It was proved by Liu, Simpson and Sulem \cite{LiSiSu1} that there exists $z_0(\sigma)\in (0,1)$, which solves the equation $F_{\sigma}(z)=0$ with
\begin{align*}
F_{\sigma}(z)=(\sigma-1)^2\big[\int_{0}^{\infty}(\cosh y-z)^{-\frac 1\sigma} dy\big]^2-\big[\int_{0}^{\infty}(\cosh y-z)^{-\frac 1\sigma-1}(z\cosh y-1) dy\big]^2,
\end{align*} 
 such that when $-2\sqrt\omega<c<2z_0\sqrt\omega$, the solitary wave solution $u_{\omega,c}$ is unstable and when $2z_0\sqrt\omega<c<2\sqrt\omega$. See also Tang and Xu \cite{TaXu-17-DNLS-Stability} for the stability of the sum of two solitary waves. Further, Fukaya \cite{Fu-16-DNLS} proved that  the solitary waves solution is unstable when $\frac32\le \sigma<2$, $c=2z_0\sqrt\omega$. After the work, the stability theory of the solitary waves solution  when $1< \sigma<\frac32$, $c=2z_0\sqrt\omega$ is the only unknown  case.
  In this paper, we aim to solve this left case.


%
%
%

Before stating our theorem, we adopt some notations. 
For $\varepsilon>0$, we define
$$ U_\varepsilon(\phi_{\omega,c})
=\{u\in H^1(\mathbb{R}): \inf_{(\theta,y) \in\mathbb{R}^2}\|u-e^{i\theta}\phi_{\omega,c}(\cdot-y)\|_{H^1}<\varepsilon\}.$$

\begin{defn}
We say that the solitary wave solution $u_{\omega,c}$ of \eqref{eqs:gDNLS} is stable
if for any $\varepsilon >0$ there exists $\delta >0$ such that if $u_0\in U_\delta(\phi_{\omega,c})$,
then the solution $u(t)$ of \eqref{eqs:gDNLS} with $u(0)=u_0$ exists for all $t>0$,
and $u(t)\in U_\varepsilon(\phi_{\omega,c})$ for all $t>0$.
Otherwise, $u_{\omega,c}$ is said to be unstable.
\end{defn}

The main result in the present paper is
\begin{thm}\label{thm:mainthm}
	Let $1<\sigma<2$ and $z_{0}=z_{0}(\sigma) \in(-1,1)$ satisfy  $F_\sigma(z_0)=0$. Then the solitary wave solutions $e^{i\omega t}\phi_{\omega,c}(x-c t)$ of \eqref{eqs:gDNLS} is unstable if $c=2z_0\sqrt\omega$.
\end{thm}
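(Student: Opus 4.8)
The plan is to work inside the Grillakis--Shatah--Strauss framework, adapted to the degenerate (borderline) situation. Write the action $S_{\omega,c}=E+\omega M+cP$, where $M,P,E$ denote the mass, momentum and energy conserved by \eqref{eqs:gDNLS}, so that the elliptic equation \eqref{Elliptic-comp} is exactly the Euler--Lagrange identity $S_{\omega,c}'(\phi_{\omega,c})=0$. I would first record the spectral picture of the linearized operator $L_{\omega,c}:=S_{\omega,c}''(\phi_{\omega,c})$: it has a single negative eigenvalue, its kernel is spanned by the two symmetry generators $i\phi_{\omega,c}$ and $\partial_x\phi_{\omega,c}$, and the remainder of its spectrum is positive and bounded away from $0$; in particular the Morse index $n(L_{\omega,c})=1$. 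Setting $d(\omega,c)=S_{\omega,c}(\phi_{\omega,c})$, the criticality of $\phi_{\omega,c}$ gives $\partial_\omega d=M(\phi_{\omega,c})$ and $\partial_c d=P(\phi_{\omega,c})$, so that the Hessian $d''$ is precisely the Jacobian matrix of $(M,P)$ with respect to $(\omega,c)$.

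Next I would reduce everything to the single variable $z=c/(2\sqrt\omega)$. Substituting $y=2\sigma\sqrt\omega\sqrt{1-z^2}\,x$ into the explicit profile turns $\varphi_{\omega,c}^{2}$ into a constant times $(\cosh y-z)^{-1/\sigma}$, so that $M$ and $P$ factor as $\omega^{a}(1-z^2)^{b}$ times the integrals $\int_0^\infty(\cosh y-z)^{-1/\sigma}\,dy$ and $\int_0^\infty(\cosh y-z)^{-1/\sigma-1}(z\cosh y-1)\,dy$ that appear in $F_\sigma$. A direct computation then identifies $\det d''$ with a strictly positive factor times $F_\sigma(z)$; hence $\det d''<0$ on the stable range and $\det d''>0$ on the unstable range, with $\det d''=0$ exactly at $z=z_0$. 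At this borderline point $d''$ is degenerate, with a one-dimensional kernel and surviving positive eigenvalue, so the comparison of $n(L_{\omega,c})=1$ against the signature of $d''$ is inconclusive and the usual GSS dichotomy does not apply.

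To resolve the degeneracy I would invoke the instability theory for the degenerate case (in the spirit of Ohta, and of Fukaya's treatment of \eqref{eqs:gDNLS} in \cite{Fu-16-DNLS}): letting $v_0=(\mu,\nu)$ span $\ker d''$, orbital instability follows once one shows that $d$ fails to be convex along $v_0$, a failure captured by a single explicit scalar $d_3(\sigma)$ built from the third-order derivative of $d$ in the direction $v_0$ (it is closely tied to $F_\sigma'(z_0)$ together with the sign of the surviving positive eigenvalue of $d''$, so one also wants the transversality $F_\sigma'(z_0)\neq0$). Both the kernel direction and $d_3(\sigma)$ are computable from the same family of integrals $\int_0^\infty(\cosh y-z_0)^{-1/\sigma-k}(\cdots)\,dy$, so this step is in principle explicit.

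The main obstacle is precisely certifying the sign of $d_3(\sigma)$ for the full range $1<\sigma<2$. Fukaya \cite{Fu-16-DNLS} could only establish $d_3(\sigma)\neq0$ with the instability sign for $3/2\le\sigma<2$, and the regime $1<\sigma<3/2$ was left open because the relevant combination of integrals is not manifestly signed there. My strategy for this step is to use the defining relation $F_\sigma(z_0)=0$ to eliminate one of the integrals, thereby collapsing $d_3(\sigma)$ to a single integral expression, and then to pin down its sign through integration-by-parts identities among the quantities $\int_0^\infty(\cosh y-z_0)^{-1/\sigma-k}(\cdots)\,dy$ together with a monotonicity argument in $z$ (or in $\sigma$). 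Pushing these integral estimates through cleanly enough to cover $1<\sigma<3/2$ is where the real work lies; once the sign is secured, the remaining steps are the standard packaging of the degenerate GSS argument into the orbital-instability statement of Theorem \ref{thm:mainthm}.
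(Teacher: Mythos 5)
Your setup (GSS framework, $n(S''_{\omega,c}(\phi_{\omega,c}))=1$, $d''=$ Jacobian of $(M,P)$, reduction to $z=c/(2\sqrt\omega)$, $\det d''\propto F_\sigma(z)$, degeneracy at $z=z_0$) matches the background the paper relies on from \cite{LiSiSu1}. The gap is in your diagnosis of why the range $1<\sigma<\frac{3}{2}$ was open and, consequently, in the remedy you propose. The restriction $\frac{3}{2}\le\sigma<2$ in \cite{Fu-16-DNLS} does not come from an inability to certify the sign of the third-order scalar $d_3(\sigma)$; it comes from regularity of the energy. The degenerate Lyapunov-functional argument you invoke requires expanding $S_{\omega,c}$ around $\phi_{\omega,c}$ beyond second order with a controlled remainder, and this forces differentiating the nonlinearity $|u|^{2\sigma}$ more than twice: those derivatives behave like $|u|^{2\sigma-3}$, which is singular near the zeros of $u$ (and $u$ vanishes at spatial infinity) precisely when $\sigma<\frac{3}{2}$. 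The paper states this explicitly: its whole point is ``to avoid the requirement of the high-order regularities of the energy.'' So eliminating one integral via $F_\sigma(z_0)=0$ and running integration-by-parts and monotonicity arguments on $\int_0^\infty(\cosh y-z_0)^{-1/\sigma-k}(\cdots)\,dy$ cannot close the case $1<\sigma<\frac{3}{2}$: even with the sign of $d_3$ in hand, your scheme has no way to justify the Taylor expansion it is built on. (Miao--Tang--Xu did independently push a third-derivative method through, but that requires a genuinely new device for the remainder which your proposal does not supply.)

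What the paper actually does is structurally different and is worth internalizing. After the modulation decomposition $u=e^{i\theta}(\phi_{\omega+\lambda\mu,c+\lambda\nu}+\varepsilon)(\cdot-y)$ with three orthogonality conditions and the coercivity of $S''_{\omega,c}$ on the admissible subspace (Proposition \ref{Modulation}), the higher-order information along the kernel direction is extracted not from a Taylor expansion of $E$ but from localized virial identities: the functional $I=-\sqrt\omega I_1+I_2+\widetilde C_{\omega,c}\lambda$ is engineered so that the term linear in $\varepsilon$ cancels from $I'(t)$, leaving
\begin{equation*}
I'(t)=A(u_0)+b_1\lambda^2+O\Big(\lambda\|\varepsilon\|_{H^1(\R)}+\|\varepsilon\|_{H^1(\R)}^2+\tfrac{1}{R}\Big)+o(\lambda^2),
\end{equation*}
where $b_1>0$ is produced by \emph{second} derivatives of $M$ and $P$ in the direction $(\mu,\nu)$ (Lemma \ref{Apendix}) — only $C^2$ data. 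Taking $u_0=\phi_{\omega,c}+\delta_1(-a_0\phi_{\omega,c}+i\partial_x\phi_{\omega,c})$ gives $A(u_0)\ge b_2\delta_1>0$, while $\|\varepsilon\|_{H^1(\R)}^2\lesssim\lambda\delta_1$ from conservation and coercivity; hence $I'(t)\gtrsim\delta_1$, contradicting the a priori bound $|I(t)|\lesssim R$. If you want to keep your route, you must either regularize the nonlinearity and control the resulting errors, or replace the Lyapunov expansion by a mechanism consuming only two derivatives of the conserved quantities — which is exactly what the virial construction accomplishes.
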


In this paper, we use the same ideas as in \cite{Yifei}. It relies on the modulation theory and construction of the virial identities. Compared to \cite{Fu-16-DNLS}, the idea is to utilize  virial identities to replace the Lyapunov functional, to obtain the lower bound on modulations. This can be used to avoid the requirement of the high-order regularities of the energy.  However, the construction in the present paper is much more delicate, due to the complicated structure of the equation.

This paper is organized as follows. In Section 2, we give the definitions of some important functionals and some useful lemmas. In Section 3, we obtain the modulation result and show the coercivity for the second variation. In Section 4,  we prove the main theorem.

\begin{remark}
We note that the same result in Theorem \ref{thm:mainthm} was obtained independently by Miao-Tang-Xu in \cite{Miao-Tang-Xu18} (appear on arXiv on March 20, 2018) by different method. They used the third derivative of the energy around the solitary wave. 
\end{remark}

\section{Preliminaries}

\subsection{Notations}

We use $A\lesssim B$ or $A=O(B)$ to denote an estimate of the form $A\leq C B$ for some constant $C>0$.
Similarly, we will write $A\sim B$ to mean $A\lesssim B$ and $B\lesssim A$. And we denote $\dot f=\partial_t f$.

For $u,v\in L^2(\mathbb{R})=L^2(\mathbb{R,C})$, we define
$$\langle u,v\rangle=\mbox{Re}\int_{\mathbb{R}}u(x)\overline{v(x)}\,dx$$
and regard $L^2(\mathbb{R})$ as a real Hilbert space.

For a function $f(x)$, its $L^{q}$-norm $\|f\|_{L^q}=\Big(\displaystyle\int_{\mathbb{R}} |f(x)|^{q}dx\Big)^{\frac{1}{q}}$
and its $H^1$-norm $\|f\|_{H^1}=(\|f\|^2_{L^2}+\|\partial_x f\|^2_{L^2})^{\frac{1}{2}}$.

\subsection{Conservation laws}
The solution $u(t)$ of  \eqref{eqs:gDNLS} satisfies three conservation laws,
\begin{align}
{E}(u(t))={E}(u_0),\quad {P}(u(t))={P}(u_0),\quad {M}(u(t))={M}(u_0)\nonumber
\end{align}
for all $t\in[0,T_{\max})$,
where $T_{\max}$ denotes the maximal existence time of $u(t)$, and
\begin{align*}
{E}(u)=&\frac{1}{2}\| \partial_xu\|_{L^2}^2-\frac{1}{2(\sigma+1)}\mbox{Im}\int_{\mathbb{R}}|u|^{2\sigma}u\,\overline{\partial_xu}\,dx,\\
{P}(u)=&\frac{1}{2}(i\partial_xu,u)_{L^2}=\frac{1}{2}\mbox{Im}\int_{\mathbb{R}}u\,\overline{\partial_xu}\,dx,\\
{M}(u)=&\frac{1}{2}\|u\|_{L^2}^2.
\end{align*}

\subsection{Some functionals}
From the definitions of $E$, $P$ and $M$, we have
\begin{align*}
E'(u)=&-\partial_x^2u-i|u|^{2\sigma}\partial_xu,\\
P'(u)=&i\partial_xu,\\
M'(u)=&u.
\end{align*}
Let
\begin{align*}
S_{\omega,c}(u)&=E(u)+\omega M(u)+cP(u),
\end{align*}
then we have
\begin{align}
S'_{\omega,c}(u)=&E'(u)+\omega M'(u)+cP'(u)\nonumber\\
=&-\partial_x^2u-i|u|^{2\sigma}\partial_xu+\omega u+ic\partial_xu.\label{S'}
\end{align}
Hence,  \eqref{Elliptic-comp} is equivalent to $S'_{\omega,c}(\phi)=0$.
Hence for the solution $\phi_{\omega,c}$ to \eqref{Elliptic-comp}, we have
\begin{align}\label{S'wc}
S'_{\omega,c}(\phi_{\omega,c})=0.
\end{align}
Moreover, by \eqref{S'}, we obtain
\begin{align}\label{S''}
S''_{\omega,c}(\phi_{\omega,c})f=&-\partial_x^2f+\omega f+ic\partial_x f-i\sigma|\phi_{\omega,c}|^{2\sigma-2}\overline{\phi_{\omega, c}}\,\partial_x\phi_{\omega,c}\,f\nonumber\\
&-i\sigma |\phi_{\omega, c}|^{2\sigma-2}\phi_{\omega,c}\partial_x\phi_{\omega,c}\,\overline{f}-i|\phi_{\omega,c}|^{2\sigma}\partial_xf.
\end{align}	

\subsection{Useful Lemma}
In this subsection, we give some lemmas which are useful in the following sections.
First, we have following formulas.
\begin{lem}\label{S''f}
	Let $1<\sigma<2$ and $(\omega,c)\in \R^2$ satisfy $c^2<4\omega$, we have
	\begin{align}
	S''_{\omega, c}(\phi_{\omega, c})\phi_{\omega, c}=&-2\sigma i |\phi_{\omega, c}|^{2\sigma}\partial_x\phi_{\omega, c},\label{S''phi}\\ 
	S''_{\omega, c}(\phi_{\omega, c})(i\partial_x\phi_{\omega, c})=&-2\sigma\omega|\phi_{\omega, c}|^{2\sigma}\phi_{\omega, c}.\nonumber
	\end{align}
\end{lem}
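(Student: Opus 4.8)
The plan is to verify both identities in Lemma~\ref{S''f} by direct computation, exploiting the fact that $\phi_{\omega,c}$ solves the elliptic equation \eqref{S'wc}. The key observation is that the family $\phi_{\omega,c}$ enjoys scaling and symmetry relations in its parameters, and that the operator $S''_{\omega,c}(\phi_{\omega,c})$ acting on the natural directions $\phi_{\omega,c}$ and $i\partial_x\phi_{\omega,c}$ can be evaluated without knowing the explicit profile, simply from the defining PDE. First I would recall that $S'_{\omega,c}(\phi_{\omega,c})=0$, i.e. $-\partial_x^2\phi+\omega\phi+ic\partial_x\phi-i|\phi|^{2\sigma}\partial_x\phi=0$, where I abbreviate $\phi=\phi_{\omega,c}$.

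For the first identity \eqref{S''phi}, the cleanest route is to recognize that $\phi\mapsto S'_{\omega,c}(\phi)$ is homogeneous of a computable degree in a suitable sense once we track how each term scales. Concretely, I would start from the expression \eqref{S''} for $S''_{\omega,c}(\phi)f$ and substitute $f=\phi$. The linear terms $-\partial_x^2\phi+\omega\phi+ic\partial_x\phi$ reassemble, via \eqref{S'wc}, into $i|\phi|^{2\sigma}\partial_x\phi$. It then remains to collect the three nonlinear contributions obtained by setting $f=\phi$: namely $-i\sigma|\phi|^{2\sigma-2}\overline\phi\,\partial_x\phi\cdot\phi$, $-i\sigma|\phi|^{2\sigma-2}\phi\,\partial_x\phi\cdot\overline\phi$, and $-i|\phi|^{2\sigma}\partial_x\phi$. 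Using $\phi\overline\phi=|\phi|^2$, the first two each equal $-i\sigma|\phi|^{2\sigma}\partial_x\phi$, so together with the linear remainder $i|\phi|^{2\sigma}\partial_x\phi$ and the last term $-i|\phi|^{2\sigma}\partial_x\phi$ one obtains
\begin{align*}
S''_{\omega,c}(\phi)\phi = i|\phi|^{2\sigma}\partial_x\phi - 2\sigma i|\phi|^{2\sigma}\partial_x\phi - i|\phi|^{2\sigma}\partial_x\phi = -2\sigma i|\phi|^{2\sigma}\partial_x\phi,
\end{align*}
which is exactly \eqref{S''phi}. The only subtlety here is bookkeeping the real-versus-complex structure of the terms in \eqref{S''}, since $\partial_x|\phi|^{2\sigma}$ is real but $\phi$ is complex-valued.

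For the second identity, I would set $f=i\partial_x\phi$ in \eqref{S''}. A more structural approach, however, is to differentiate the profile equation \eqref{S'wc} in $x$: since $S'_{\omega,c}(\phi)\equiv 0$ as a function of $x$, applying $\partial_x$ and interpreting the result through the linearization gives $S''_{\omega,c}(\phi)(\partial_x\phi)=0$ only for a translation-invariant functional, so I must be careful—the phase factor $e^{icx/2}$ in \eqref{phi} breaks pure translation invariance and is precisely what produces the $\omega$-dependent right-hand side. I expect the main obstacle to be correctly accounting for this: one should either differentiate the relation $S'_{\omega,c}(e^{i\theta}\phi(\cdot-y))$ and $S'_{\omega,c}$ under the Galilean-type symmetry that links $c$ and the modulation parameters, or directly insert $f=i\partial_x\phi$ and carefully simplify the resulting nonlinear terms using \eqref{S'wc} to eliminate $\partial_x^2\phi$. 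The factor $2\sigma\omega$ and the clean form $-2\sigma\omega|\phi|^{2\sigma}\phi$ on the right strongly suggest that the computation collapses after substituting the profile equation, but verifying the cancellations among the four nonlinear terms in \eqref{S''} when $f=i\partial_x\phi$—each involving $\partial_x^2\phi$ or $(\partial_x\phi)^2$—will require the most care and is where I anticipate the bulk of the work.
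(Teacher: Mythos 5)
Your verification of the first identity \eqref{S''phi} is correct and is essentially the paper's own computation: substitute $f=\phi_{\omega,c}$ into \eqref{S''}, use $\phi\overline\phi=|\phi|^2$ to merge the two cubic-type terms into $-2\sigma i|\phi|^{2\sigma}\partial_x\phi$, and use the profile equation \eqref{S'wc} to replace $-\partial_x^2\phi+\omega\phi+ic\partial_x\phi$ by $i|\phi|^{2\sigma}\partial_x\phi$. Nothing to add there.

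The second identity, however, is not proved in your proposal; you only describe where the work would lie, and you stop precisely at the point where the essential ingredient is needed. If you substitute $f=i\partial_x\phi$ into \eqref{S''} and compare with $i\partial_x$ applied to the profile equation $S'_{\omega,c}(\phi)=0$, the terms involving $\partial_x^3\phi$, $\partial_x^2\phi$ and the holomorphic square $(\partial_x\phi)^2$ all cancel, and what survives is
\begin{align*}
S''_{\omega,c}(\phi_{\omega,c})(i\partial_x\phi_{\omega,c})=-2\sigma|\phi_{\omega,c}|^{2\sigma-2}\phi_{\omega,c}\,|\partial_x\phi_{\omega,c}|^2 .
\end{align*}
This is still not the claimed right-hand side. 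To conclude you need the \emph{pointwise} identity $|\partial_x\phi_{\omega,c}(x)|^2=\omega\,|\phi_{\omega,c}(x)|^2$, which does not follow from the linearized equation at all: it comes from the first integral of the ODE satisfied by the real profile $\varphi_{\omega,c}$. Writing $\phi=\varphi e^{i\Theta}$ with $\Theta'=\tfrac c2-\tfrac{1}{2\sigma+2}\varphi^{2\sigma}$, one has $(\varphi')^2=(\omega-\tfrac{c^2}{4})\varphi^2+\tfrac{c}{2\sigma+2}\varphi^{2\sigma+2}-\tfrac{1}{(2\sigma+2)^2}\varphi^{4\sigma+2}$, and adding $(\Theta')^2\varphi^2$ makes every term except $\omega\varphi^2$ cancel. (Note this is strictly stronger than the integrated identity \eqref{w}.) Your guess that ``the computation collapses after substituting the profile equation'' is therefore wrong as stated: the profile equation alone leaves you with $|\partial_x\phi|^2$ in place of $\omega|\phi|^2$, and without the first-integral identity the factor $\omega$ never appears. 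This missing step is the actual content of the second formula, so as written the proposal establishes only half of the lemma.
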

\begin{proof}
First, using \eqref{S'wc} and \eqref{S''}, we get
\begin{align*}
S''_{\omega, c}(\phi_{\omega, c})\phi_{\omega, c}
=&-\partial_x^2\phi_{\omega,c}+\omega\phi_{\omega,c}+ic\partial_x\phi_{\omega,c}-i\sigma|\phi_{\omega,c}|^{2\sigma-2}|\phi_{\omega, c}|^2\,\partial_x\phi_{\omega,c}\\
&-i\sigma|\phi_{\omega,c}|^{2\sigma-2}|\phi_{\omega,c}|^2\partial_x\phi_{\omega,c}-i|\phi_{\omega,c}|^{2\sigma}\partial_x\phi_{\omega,c}\\
=&-\partial_x^2\phi_{\omega,c}-(2\sigma+1)i|\phi_{\omega,c}|^{2\sigma}\partial_x\phi_{\omega,c}+\omega\phi_{\omega,c}+ic\partial_x\phi_{\omega,c}\\
=&-2\sigma i |\phi_{\omega, c}|^{2\sigma}\partial_x\phi_{\omega, c}.
\end{align*}

Similarly, using \eqref{S'wc} and \eqref{S''},  we obtain
\begin{align*}
S''_{\omega, c}(\phi_{\omega, c})(i\partial_x\phi_{\omega, c})
=&i\partial_x\big[-\partial_x^2\phi_{\omega,c}+\omega\phi_{\omega,c}
+ic\partial_x\phi_{\omega,c}-i\sigma|\phi_{\omega,c}|^{2\sigma}\,\partial_x\phi_{\omega,c}\big]\\
&-2\sigma|\phi_{\omega,c}|^{2\sigma-2}\,\phi_{\omega,c}|\partial_x\phi_{\omega,c}|^2\\
=&-2\sigma\omega|\phi_{\omega, c}|^{2\sigma}\phi_{\omega, c}.
\end{align*}
This concludes the proof of Lemma \ref{S''f}.
\end{proof}

Let
$$J(u)=\mbox{Im}\int_{\mathbb{R}}|u|^{2\sigma}u\,\overline{\partial_x u} dx.$$
Then we have
\begin{align}\label{J'u}
J'(u)=2(\sigma+1) i|u|^{2\sigma}\partial_x u.
\end{align}
Moreover, we have the following lemma.
\begin{lem}\label{JL}
Let $1<\sigma<2$ and $(\omega,c)\in \R^2$ satisfy $c^2<4\omega$, then
\begin{align}\label{w}
\|\partial_x\phi_{\omega, c}\|_{L^2}^2=&\omega\|\phi_{\omega, c}\|_{L^2}^2.
\end{align}
Moreover,
\begin{align}\label{J}
J(\phi_{\omega, c})=4\omega M(\phi_{\omega, c})+2cP(\phi_{\omega, c}),
\end{align}
\begin{align}\label{JPE}
\frac{\sigma-1}{\sigma+1}J(\phi_{\omega,c})=2cP(\phi_{\omega,c})+4E(\phi_{\omega,c}),
\end{align}
and
\begin{align}\label{J'}
J'(\phi_{\omega, c})=-\frac{\sigma+1}{\sigma}S''_{\omega, c}(\phi_{\omega, c})\phi_{\omega, c}.
\end{align}
\end{lem}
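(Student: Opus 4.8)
The plan is to treat \eqref{J'} as an immediate algebraic consequence of the formulas already in hand, to extract \eqref{w} from a dilation (Pohozaev) identity, and then to obtain \eqref{J} and \eqref{JPE} by combining \eqref{w} with a single pairing of the Euler--Lagrange equation \eqref{S'wc} against $\phi_{\omega,c}$. Throughout I use that for $c^2<4\omega$ the explicit profile $\varphi_{\omega,c}$ decays exponentially, so that all the differentiations under the integral and integrations by parts below are legitimate and boundary terms vanish. Identity \eqref{J'} is the quickest: by \eqref{J'u} one has $J'(\phi_{\omega,c})=2(\sigma+1)i|\phi_{\omega,c}|^{2\sigma}\partial_x\phi_{\omega,c}$, while \eqref{S''phi} can be read as $i|\phi_{\omega,c}|^{2\sigma}\partial_x\phi_{\omega,c}=-\tfrac{1}{2\sigma}S''_{\omega,c}(\phi_{\omega,c})\phi_{\omega,c}$; substituting gives $J'(\phi_{\omega,c})=-\tfrac{\sigma+1}{\sigma}S''_{\omega,c}(\phi_{\omega,c})\phi_{\omega,c}$.

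For \eqref{w} I would introduce the dilation $\phi^\lambda(x):=\phi_{\omega,c}(\lambda x)$ and record how each functional scales. A change of variables shows that the two first-order functionals are scale invariant, $P(\phi^\lambda)=P(\phi_{\omega,c})$ and $J(\phi^\lambda)=J(\phi_{\omega,c})$, whereas $\|\partial_x\phi^\lambda\|_{L^2}^2=\lambda\|\partial_x\phi_{\omega,c}\|_{L^2}^2$ and $\|\phi^\lambda\|_{L^2}^2=\lambda^{-1}\|\phi_{\omega,c}\|_{L^2}^2$. Hence
\[
S_{\omega,c}(\phi^\lambda)=\tfrac{\lambda}{2}\|\partial_x\phi_{\omega,c}\|_{L^2}^2+\tfrac{\omega}{2\lambda}\|\phi_{\omega,c}\|_{L^2}^2-\tfrac{1}{2(\sigma+1)}J(\phi_{\omega,c})+cP(\phi_{\omega,c}),
\]
and, since $\phi_{\omega,c}$ is a critical point of $S_{\omega,c}$ by \eqref{S'wc}, the function $\lambda\mapsto S_{\omega,c}(\phi^\lambda)$ has vanishing derivative at $\lambda=1$. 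Differentiating the display and evaluating at $\lambda=1$ forces $\tfrac12\|\partial_x\phi_{\omega,c}\|_{L^2}^2-\tfrac{\omega}{2}\|\phi_{\omega,c}\|_{L^2}^2=0$, which is exactly \eqref{w}.

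Finally I would pair \eqref{S'wc} with $\phi_{\omega,c}$ in the real inner product $\langle\cdot,\cdot\rangle$ and integrate by parts term by term, recognizing $\langle ci\partial_x\phi_{\omega,c},\phi_{\omega,c}\rangle=2cP(\phi_{\omega,c})$ and $\langle -i|\phi_{\omega,c}|^{2\sigma}\partial_x\phi_{\omega,c},\phi_{\omega,c}\rangle=-J(\phi_{\omega,c})$ from the momentum and $J$ integrands. This produces the single relation
\[
\|\partial_x\phi_{\omega,c}\|_{L^2}^2+\omega\|\phi_{\omega,c}\|_{L^2}^2+2cP(\phi_{\omega,c})-J(\phi_{\omega,c})=0.
\]
Substituting \eqref{w} and $M(\phi_{\omega,c})=\tfrac12\|\phi_{\omega,c}\|_{L^2}^2$ turns this into $J(\phi_{\omega,c})=4\omega M(\phi_{\omega,c})+2cP(\phi_{\omega,c})$, i.e. \eqref{J}; and using \eqref{w} to write $\|\partial_x\phi_{\omega,c}\|_{L^2}^2=\tfrac12\big(J(\phi_{\omega,c})-2cP(\phi_{\omega,c})\big)$ in $E(\phi_{\omega,c})=\tfrac12\|\partial_x\phi_{\omega,c}\|_{L^2}^2-\tfrac{1}{2(\sigma+1)}J(\phi_{\omega,c})$ gives $4E(\phi_{\omega,c})=\tfrac{\sigma-1}{\sigma+1}J(\phi_{\omega,c})-2cP(\phi_{\omega,c})$, which is \eqref{JPE}. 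The only genuinely delicate point is the sign bookkeeping in the complex integrations by parts, both in verifying the scale invariance of $J$ and $P$ for \eqref{w} and in identifying the $P$ and $J$ terms in the pairing against $\phi_{\omega,c}$; once those are pinned down the rest is algebra.
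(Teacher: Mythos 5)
Your proof is correct and follows essentially the same route as the paper: \eqref{J'} by combining \eqref{J'u} with \eqref{S''phi}, \eqref{J} by pairing $S'_{\omega,c}(\phi_{\omega,c})=0$ against $\phi_{\omega,c}$, and \eqref{JPE} by algebra from \eqref{w}, \eqref{J} and the definition of $E$. The only cosmetic difference is that you obtain the Pohozaev identity \eqref{w} by differentiating $\lambda\mapsto S_{\omega,c}(\phi_{\omega,c}(\lambda\,\cdot))$ at $\lambda=1$, whereas the paper pairs $S'_{\omega,c}(\phi_{\omega,c})=0$ directly with $\overline{x\partial_x\phi_{\omega,c}}$; by the chain rule these are the same computation.
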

\begin{proof}
From the equation $S'_{\omega,c}(\phi_{\omega,c})=0$, by producting with $\overline{x\partial_x\phi_{\omega,c}}$ and $\overline{\phi_{\omega,c}}$ respectively, and taking the real part, we obtain
\begin{align*}
\|\partial_x\phi_{\omega, c}\|_{L^2}^2=\omega\|\phi_{\omega, c}\|_{L^2}^2,
\end{align*}
and
\begin{align*}
\|\partial_x\phi_{\omega, c}\|_{L^2}^2+\omega\|\phi_{\omega, c}\|_{L^2}^2+c\,\mbox{Im}\int_{\mathbb{R}}\phi_{\omega, c}\,\overline{\partial_x\phi_{\omega, c}} dx-J(\phi_{\omega, c})=0.
\end{align*}
Therefore, we have
$$J(\phi_{\omega, c})=4\omega M(\phi_{\omega, c})+2cP(\phi_{\omega, c}).$$
Combining the definiton of $E$ and \eqref{w}, we have
\begin{align*}
E(\phi_{\omega,c})=&\frac 12 \|\partial_x\phi_{\omega,c}\|^2_{L^2}-\frac{1}{2(\sigma+1)}\mbox{Im}\int_{\mathbb{R}}|u|^{2\sigma}u\,\overline{\partial_xu}\,dx\\
=&\omega M(\phi_{\omega,c})-\frac{1}{2(\sigma+1)}J(\phi_{\omega,c}).
\end{align*}
Then, we get
\begin{align*}
\omega M(\phi_{\omega,c})=E(\phi_{\omega,c})+\frac{1}{2(\sigma+1)}J(\phi_{\omega,c}).
\end{align*}
Hence, we obtain
\begin{align*}
J(\phi_{\omega,c})=&4[E(\phi_{\omega,c})+\frac{1}{2(\sigma+1)}J(\phi_{\omega,c})]+2cP(\phi_{\omega,c})\\
=&2cP(\phi_{\omega,c})+4E(\phi_{\omega,c})+\frac{2}{\sigma+1}J(\phi_{\omega,c}).
\end{align*}
That is,
\begin{align*}
\frac{\sigma-1}{\sigma+1}J(\phi_{\omega,c})=2cP(\phi_{\omega,c})+4E(\phi_{\omega,c}).
\end{align*}
Moreover, from \eqref{S''phi} and \eqref{J'u}, we have
\begin{align*}
J'(\phi_{\omega, c})=&2(\sigma+1) i|\phi_{\omega, c}|^{2\sigma}\partial_x \phi_{\omega, c}\\
=&-\frac{\sigma+1}{\sigma}S''_{\omega, c}(\phi_{\omega, c})\phi_{\omega, c}.
\end{align*}
This completes the proof.
\end{proof}

\begin{lem}\label{Apendix1}
Let $1<\sigma<2$ and $(\omega,c)\in \R^2$ satisfy $c^2<4\omega$, then
\begin{align}\label{xphi}
\|\phi_{\omega,c}\|_{L^{2\sigma+2}}^{2\sigma+2}=4(\sigma+1)\big[\frac c2M(\phi_{\omega,c})+P(\phi_{\omega,c})\big],
\end{align}
and
$$\partial_cM(\phi_{\omega,c})=\partial_\omega P(\phi_{\omega,c}), \quad   \partial_cP(\phi_{\omega,c})=\omega\partial_\omega M(\phi_{\omega,c}).$$
\end{lem}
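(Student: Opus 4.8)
The identity \eqref{xphi} I would get by a direct computation from the explicit phase. Writing $\phi_{\omega,c}=\varphi_{\omega,c}e^{i\theta}$, where by \eqref{phi} one has $\theta'=\tfrac c2-\tfrac{1}{2(\sigma+1)}\varphi_{\omega,c}^{2\sigma}$, the modulus is $|\phi_{\omega,c}|^2=\varphi_{\omega,c}^2$ and $\mbox{Im}\,(\phi_{\omega,c}\overline{\partial_x\phi_{\omega,c}})=-\theta'\varphi_{\omega,c}^2$. Hence
\begin{align*}
P(\phi_{\omega,c})=-\tfrac12\int_{\R}\theta'\,\varphi_{\omega,c}^2\,dx=-\tfrac c2\,M(\phi_{\omega,c})+\tfrac{1}{4(\sigma+1)}\|\phi_{\omega,c}\|_{L^{2\sigma+2}}^{2\sigma+2},
\end{align*}
and rearranging is precisely \eqref{xphi}.

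For the first derivative identity the plan is to use the scalar function $d(\omega,c):=S_{\omega,c}(\phi_{\omega,c})$. By the chain rule and \eqref{S'wc} the paired terms drop out, leaving
\begin{align*}
\partial_\omega d=\langle S'_{\omega,c}(\phi_{\omega,c}),\partial_\omega\phi_{\omega,c}\rangle+M(\phi_{\omega,c})=M(\phi_{\omega,c}),\qquad \partial_c d=P(\phi_{\omega,c}).
\end{align*}
Since $(\omega,c)\mapsto\phi_{\omega,c}$ is smooth by the explicit formula, $d\in C^2$, and equality of the mixed partials $\partial_c\partial_\omega d=\partial_\omega\partial_c d$ gives $\partial_c M(\phi_{\omega,c})=\partial_\omega P(\phi_{\omega,c})$ at once.

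The identity $\partial_c P=\omega\partial_\omega M$ is the main obstacle. The scaling symmetry $\phi_{\lambda^2\omega,\lambda c}(x)=\lambda^{1/(2\sigma)}\phi_{\omega,c}(\lambda x)$ makes $M$ and $P$ homogeneous of degrees $\tfrac1\sigma-1$ and $\tfrac1\sigma$ in $(\sqrt\omega,c)$, and together with the first identity this yields the corresponding Euler relations; but a short computation shows $\partial_c P=\omega\partial_\omega M$ is \emph{independent} of these, so it must encode specific information about the profile rather than following from the scaling and symplectic structure alone. I would therefore compute $M$ and $P$ explicitly. With $z=c/(2\sqrt\omega)\in(-1,1)$ and $\kappa=\sqrt{4\omega-c^2}=2\sqrt\omega\sqrt{1-z^2}$ one has $\varphi_{\omega,c}^{2\sigma}=2(\sigma+1)\sqrt\omega(1-z^2)\,(\cosh(\sigma\kappa x)-z)^{-1}$, and the substitution $y=\sigma\kappa x$ gives
\begin{align*}
M(\phi_{\omega,c})=C_M\,\omega^{\frac1{2\sigma}-\frac12}(1-z^2)^{\frac1\sigma-\frac12}I_{1/\sigma}(z),\qquad \|\phi_{\omega,c}\|_{L^{2\sigma+2}}^{2\sigma+2}=C_K\,\omega^{\frac1{2\sigma}}(1-z^2)^{\frac1\sigma+\frac12}I_{1/\sigma+1}(z),
\end{align*}
where $I_\alpha(z)=\int_0^\infty(\cosh y-z)^{-\alpha}\,dy$ and $C_M,C_K$ are explicit constants; $P$ is then recovered from \eqref{xphi}. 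After factoring out the common powers of $\omega$, the second identity becomes a first-order ODE relation in $z$ between $I_{1/\sigma}$ and $I_{1/\sigma+1}$ (and the first identity reduces similarly, reproving it).

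To close this ODE relation I would use $\tfrac{d}{dz}I_\alpha=\alpha\,I_{\alpha+1}$ together with the integration-by-parts identity coming from $\int_0^\infty\partial_y\big[\sinh y\,(\cosh y-z)^{-1/\sigma-1}\big]\,dy=0$, which expresses $I_{1/\sigma+2}$ through $I_{1/\sigma}$ and $I_{1/\sigma+1}$. The delicate point is that this boundary term converges only because the exponent $1/\sigma+1$ exceeds $1$, whereas the naive analogue at the exponent $1/\sigma$ has a non-vanishing (indeed divergent) boundary term precisely because $\sigma>1$; so the computation must be organized around the higher power. Once this is inserted, the $I_{1/\sigma+1}$-contributions cancel and the identity collapses to the purely algebraic relation $C_K=4(\sigma+1)C_M$, which holds for the explicit constants. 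The hardest part is thus not any single step but recognizing that the factor $\omega$ requires leaving the soft (scaling plus symplectic) framework and carrying out the explicit profile computation with the correctly chosen integration by parts.
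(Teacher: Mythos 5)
Your proof of \eqref{xphi} is exactly the paper's: both of you read off the phase derivative $\theta'=\tfrac c2-\tfrac1{2(\sigma+1)}\varphi_{\omega,c}^{2\sigma}$ from \eqref{phi}, compute $P(\phi_{\omega,c})=-\tfrac c4\|\varphi_{\omega,c}\|_{L^2}^2+\tfrac1{4(\sigma+1)}\|\varphi_{\omega,c}\|_{L^{2\sigma+2}}^{2\sigma+2}$, and rearrange. For the two derivative identities the comparison is more interesting, because the paper does not prove them at all: it simply cites \cite{LiSiSu1}, Appendix Lemma A.3. You supply an actual argument. The symmetric identity $\partial_cM=\partial_\omega P$ via $d'(\omega,c)=(M,P)$ and equality of mixed partials is standard and correct (the paper records $d'=(M,P)$ in Section 2 but never draws this conclusion explicitly). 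For $\partial_cP=\omega\partial_\omega M$, your observation that it is genuinely independent of the scaling homogeneity ($M,P$ of degrees $\tfrac1\sigma-1,\tfrac1\sigma$) together with $\partial_cM=\partial_\omega P$ is correct -- those give only three relations among four partials -- so some profile-specific input is unavoidable. Your explicit route checks out: with $\alpha=1/\sigma$ one gets $M=C_M\omega^{\frac1{2\sigma}-\frac12}(1-z^2)^{\alpha-\frac12}I_\alpha(z)$ and $\|\phi_{\omega,c}\|_{L^{2\sigma+2}}^{2\sigma+2}=4(\sigma+1)C_M\omega^{\frac1{2\sigma}}(1-z^2)^{\alpha+\frac12}I_{\alpha+1}(z)$, the target identity reduces to $\frac{d}{dz}\big[(1-z^2)^{\alpha+\frac12}I_{\alpha+1}\big]=\alpha(1-z^2)^{\alpha-\frac12}I_\alpha$, and your integration by parts gives precisely the needed recurrence $(\alpha+1)(1-z^2)I_{\alpha+2}=\alpha I_\alpha+(2\alpha+1)zI_{\alpha+1}$, with the boundary term vanishing exactly because the exponent $\alpha+1>1$, as you note. (Your closing phrase that everything ``collapses to $C_K=4(\sigma+1)C_M$'' is a slight overstatement -- that constant relation is what was already used to pass from \eqref{xphi} to the formula for $P$, and the real content is the recurrence -- but this is bookkeeping, not a gap.) In short: same proof as the paper where the paper gives one, and a correct self-contained proof where the paper defers to \cite{LiSiSu1}.
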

\begin{proof}
The details are given in Appendix.
\end{proof}

For any $(\omega,c)\in \R^2$ satisfying $c^2<4\omega$, we define a function $d(\omega,c)$ by
$$d(\omega,c)=S_{\omega,c}(\phi_{\omega,c}).$$
Thus, we have
$$d'(\omega,c)=\big(\partial_\omega d(\omega,c), \partial_c d(\omega,c)\big)=\big(M(\phi_{\omega,c}), P(\phi_{\omega,c})\big),$$
and the Hessian matrix $d''(\omega,c)$ of $d(\omega,c)$ is given by
\begin{align*}
d''(\omega,c)=
\left[\begin{array}{ccccc}
\partial_{\omega\omega} d(\omega,c) & \partial_{\omega c}d(\omega,c)\\
\partial_{\omega c} d(\omega,c)&\partial_{cc} d(\omega,c)
\end{array}\right]
=\left[\begin{array}{ccccc}
\partial_\omega M(\phi_{\omega,c}) &\partial_\omega P(\phi_{\omega,c})\\
\partial_c M(\phi_{\omega,c}) &\partial_c P(\phi_{\omega,c})
\end{array}\right].
\end{align*}

For general exponents $1<\sigma<2$, Liu, Simpson and Sulem \cite{LiSiSu1} proved that $z_0$ is the unique solution of $\det[d''(\omega,c)]=0$.

Let $(\mu, \nu)$ to be the eigenvector associated to zero eigenvalue of the Hessian matrix $d''(\omega,c)$. Since zero is the simple eigenvalue, $(\mu, \nu)$ is unique up to a constant. That is,
\begin{equation}\label{0}
\left\{ \aligned
&\mu\,\partial_\omega M(\phi_{\omega,c})+\nu\,\partial_\omega P(\phi_{\omega,c})=0,\\
&\mu\,\partial_c M(\phi_{\omega,c})+\nu\,\partial_c P(\phi_{\omega,c})=0.
\endaligned
\right.
\end{equation}

Together with $\partial_cM(\phi_{\omega,c})=\partial_\omega P(\phi_{\omega,c})$, \eqref{0} is equivalent to
\begin{equation}\label{0'}
\left\{ \aligned
&\mu\,\partial_\omega M(\phi_{\omega,c})+\nu\,\partial_c M(\phi_{\omega,c})=0,\\
&\mu\,\partial_\omega P(\phi_{\omega,c})+\nu\,\partial_c P(\phi_{\omega,c})=0.
\endaligned
\right.
\end{equation}

Now we have the following lemma.
\begin{lem}\label{Apendix}
Let $1<\sigma<2$ and $(\omega,c)\in \R^2$ satisfy $c=2z_0\sqrt\omega$, then
$$P(\phi_{\omega,c})=a_0M(\phi_{\omega,c}),\quad  \frac \mu\nu=\sqrt\omega,$$
where $a_0=(\sigma-1)\sqrt\omega>0$.
Moreover, there exists $\kappa_0>0$, such that
\begin{align}
\mu^2\partial_{\omega \omega}M(\phi_{\omega,c})+2\mu\nu\partial_{\omega c}M(\phi_{\omega,c})+\nu^2\partial_{c c}M(\phi_{\omega,c})
=&\kappa_0z_0,\label{uvM}\\
\mu^2\partial_{\omega \omega}P(\phi_{\omega,c})+2\mu\nu\partial_{\omega c}P(\phi_{\omega,c})+\nu^2\partial_{c c}P(\phi_{\omega,c})
=&-\kappa_0\sqrt{\omega}.\label{uvP}
\end{align}
\end{lem}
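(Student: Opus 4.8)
The plan is to express both halves of the lemma through directional derivatives of the action $d(\omega,c)=S_{\omega,c}(\phi_{\omega,c})$ along the kernel direction $(\mu,\nu)$, exploiting a single scaling identity. First I would record the purely algebraic relation
\[
\frac{\sigma+1}{\sigma}\,d(\omega,c)=2\omega M(\phi_{\omega,c})+c\,P(\phi_{\omega,c}),
\]
obtained by eliminating $E$ and $J$ among \eqref{J}, \eqref{JPE} and $d=E+\omega M+cP$. Writing $\mathcal{D}=\mu\partial_\omega+\nu\partial_c$ and using $\partial_\omega d=M$, $\partial_c d=P$, the eigenvector equations \eqref{0'} say exactly that $\mathcal{D}M=\mathcal{D}P=0$ at the base point $c=2z_0\sqrt\omega$, so that $\mathcal{D}^2 d=(\mu,\nu)\,d''(\mu,\nu)^{\top}=0$ there as well.

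For the ratio $\mu/\nu$ I would feed the relations $\partial_cM=\partial_\omega P$ and $\partial_cP=\omega\partial_\omega M$ of Lemma \ref{Apendix1} into \eqref{0'}; this rewrites the two kernel equations as a homogeneous system for $(\partial_\omega M,\partial_\omega P)$ with matrix $\left(\begin{smallmatrix}\mu&\nu\\\omega\nu&\mu\end{smallmatrix}\right)$, whose singularity forces $\mu^2=\omega\nu^2$, i.e. $\mu/\nu=\pm\sqrt\omega$. Applying $\mathcal{D}$ to the displayed identity, with $\mathcal{D}d=\mu M+\nu P$ and $\mathcal{D}M=\mathcal{D}P=0$, collapses it to $\nu P=(\sigma-1)\mu M$, that is $P=(\sigma-1)\tfrac{\mu}{\nu}M$. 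Since $M=\tfrac12\|\phi_{\omega,c}\|_{L^2}^2>0$ and $\sigma>1$, the sign of $\mu/\nu$ coincides with that of $P$; evaluating $P$ from the explicit profile $\varphi_{\omega,c}$ (so that $P/M=\sqrt\omega\,h(z_0)/f(z_0)$ is an explicit ratio of the integrals building $F_\sigma$) pins down $P>0$ at $z=z_0$, which forces $\mu/\nu=+\sqrt\omega$ and hence $P=(\sigma-1)\sqrt\omega\,M=a_0M$ with $a_0=(\sigma-1)\sqrt\omega>0$.

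For \eqref{uvM}--\eqref{uvP} I would apply $\mathcal{D}^2$ to the same identity. Because $\mathcal{D}^2 d=0$ and $\mathcal{D}M=\mathcal{D}P=0$ at the point, all cross terms drop (one uses $\mathcal{D}\omega=\mu$, $\mathcal{D}c=\nu$), and the identity reduces to
\[
2\omega\,\mathcal{D}^2M+c\,\mathcal{D}^2P=0 ,
\]
where $\mathcal{D}^2M=\mu^2\partial_{\omega\omega}M+2\mu\nu\partial_{\omega c}M+\nu^2\partial_{cc}M$ is precisely the left-hand side of \eqref{uvM}, and similarly for $P$. Inserting $c=2z_0\sqrt\omega$ gives $\mathcal{D}^2P=-\tfrac{\sqrt\omega}{z_0}\mathcal{D}^2M$, so with the single choice $\kappa_0:=\mathcal{D}^2M/z_0$ both \eqref{uvM} and \eqref{uvP} hold simultaneously with the same constant; the whole remaining content is the strict sign $\kappa_0>0$.

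The hard part will be exactly this positivity $\kappa_0>0$ (equivalently, $\mathcal{D}^2M$ and $z_0$ sharing their sign), together with the sign $P>0$ used above: neither follows from the homogeneity manipulations, as both are honest sign statements at the degenerate parameter. To settle $\kappa_0>0$ I would compute $\mathcal{D}^2M$ directly from the scaling representation $M=\omega^{\alpha}f(z)$, $P=\omega^{\gamma}h(z)$ (with $z=c/(2\sqrt\omega)$, $\alpha=\tfrac{1-\sigma}{2\sigma}$, $\gamma=\tfrac{1}{2\sigma}$), reducing it to a fixed combination of $f,h$ and their $z$-derivatives at $z_0$, i.e. to the beta-type integrals defining $F_\sigma$. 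Equivalently, $\kappa_0>0$ is the assertion that the third directional derivative $\mathcal{D}^3 d=\mu\mathcal{D}^2M+\nu\mathcal{D}^2P$ is nonzero with the correct sign, which is where the transversality of the zero $z_0$ of $F_\sigma$ (the simple sign change of $\det d''$) must enter. I expect this integral sign-chase to be the delicate, computation-heavy step, with everything else being a formal consequence of \eqref{J}, \eqref{JPE}, Lemma \ref{Apendix1} and the kernel equations \eqref{0'}.
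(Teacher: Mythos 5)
Your formal skeleton is sound and, where it is complete, genuinely slicker than the paper's argument: the identity $\tfrac{\sigma+1}{\sigma}d(\omega,c)=2\omega M(\phi_{\omega,c})+cP(\phi_{\omega,c})$ does follow from \eqref{J}, \eqref{JPE} and $d=E+\omega M+cP$, the first directional derivative does yield $\nu P=(\sigma-1)\mu M$, the singular $2\times2$ system does force $\mu^2=\omega\nu^2$, and the second directional derivative correctly reduces \eqref{uvM}--\eqref{uvP} to the single relation $2\omega\,\mathcal{D}^2M+c\,\mathcal{D}^2P=0$ plus the sign of one constant. The paper instead obtains $P^2=a_0^2M^2$ by quoting the determinant identity of Liu--Simpson--Sulem and proves \eqref{uvM}--\eqref{uvP} by brute-force evaluation of all five second partials (imported from Fukaya's appendix), so your route would shorten the formal part considerably. (Two small cautions: you should note that $(\partial_\omega M,\partial_\omega P)\neq(0,0)$ — otherwise $d''$ vanishes identically, contradicting simplicity of the zero eigenvalue — before concluding singularity of your $2\times2$ matrix; and defining $\kappa_0=\mathcal{D}^2M/z_0$ degenerates if $z_0=0$, so one should set $\kappa_0=-\mathcal{D}^2P/\sqrt{\omega}$ instead.)

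The genuine gaps are exactly the two sign statements, and your proposed resolutions would not close them. For $P(\phi_{\omega,c})>0$: evaluating $P/M$ as ``an explicit ratio of the integrals building $F_\sigma$'' cannot work on its own, because the defining relation $F_\sigma(z_0)=0$ is precisely the statement $P^2=\big[(\sigma-1)\sqrt{\omega}M\big]^2$ — it determines $|P|$ and says nothing about the sign. The paper's actual argument is global, not local: it uses that $P/M\to+\infty$ as $c\to-2\sqrt{\omega}$ together with the \emph{uniqueness} of the root $z_0$ of $\det d''=0$, so that if $P(\phi_{\omega,2z_0\sqrt\omega})$ were negative the intermediate value theorem would produce a second root — a contradiction. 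Some such continuity/uniqueness input (or an explicit monotonicity analysis of $z\mapsto(1-z^2)\alpha_1-z\alpha_0$) is indispensable, and since both $P=+a_0M$ and $\mu/\nu=+\sqrt{\omega}$ hinge on it in your scheme, this is not a peripheral detail. For $\kappa_0>0$: your homogeneity manipulations produce only the linear relation between $\mathcal{D}^2M$ and $\mathcal{D}^2P$; the strict sign is where all of the lemma's analytic content sits, and the paper gets it only by computing $\mathcal{D}^2P=-8\nu^2\omega\tilde{\kappa}\alpha_0(\sigma-1)$ explicitly in terms of the integrals $\alpha_0,\alpha_1$ (using the second-derivative identities obtained by differentiating $\partial_cM=\partial_\omega P$, $\partial_cP=\omega\partial_\omega M$, plus Lemma 10 of Fukaya's appendix). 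Your remark that this should follow from ``transversality of the zero $z_0$ of $F_\sigma$'' is a plausible heuristic but is not established anywhere — the simplicity of the sign change of $\det d''$ at $z_0$ is itself something that would have to be proved. As it stands the proposal is an attractive reorganization of the bookkeeping, but both conclusions of the lemma that carry actual information remain unproved.
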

\begin{proof}
The proof of Lemma \ref{Apendix} is postponed to Appendix.
\end{proof}

For convinience, we denote the quality $Q_{\mu, \nu}$ to be:
$$Q_{\mu, \nu}(f)=\mu M(f)+\nu P(f).$$
Moreover, we denote $\psi, \tilde\psi$ as
\begin{align*}
\psi=&\partial_\lambda \phi_{\omega+\lambda\mu, c+\lambda\nu}\lvert_{\lambda=0}=\mu \partial_\omega\phi_{\omega, c}+\nu\partial_c\phi_{\omega, c},\\
\tilde\psi=&\frac 12\partial_\lambda^2 \phi_{\omega+\lambda\mu, c+\lambda\nu}\lvert_{\lambda=0}.
\end{align*}
\begin{lem}\label{critical}
Let $1<\sigma<2$. If $c=2z_0\sqrt\omega$, then
\begin{align}\label{M'P'}
\langle M'(\phi_{\omega, c}), \psi\rangle=\langle P'(\phi_{\omega, c}), \psi\rangle=0,
\end{align}
and
\begin{align}\label{Q'0}
S''_{\omega, c}(\phi_{\omega, c})\psi=-Q'_{\mu, \nu}(\phi_{\omega, c}),\quad \big\langle S''_{\omega, c}(\phi_{\omega, c})\psi,\psi\big\rangle=0.
\end{align}
\end{lem}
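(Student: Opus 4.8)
The plan is to exploit the fact that $\phi_{\omega',c'}$ solves the stationary equation $S'_{\omega',c'}(\phi_{\omega',c'})=0$ for every admissible pair $(\omega',c')$, together with the eigenvector relations \eqref{0'}, and to derive all four identities by differentiating along the ray $(\omega',c')=(\omega+\lambda\mu,c+\lambda\nu)$ at $\lambda=0$. The whole lemma is a Grillakis--Shatah--Strauss type bookkeeping exercise rather than a computation requiring new ideas.

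First I would establish \eqref{M'P'}. Since $M'(u)=u$ and $P'(u)=i\partial_x u$ are precisely the Fr\'echet gradients of $M$ and $P$, the chain rule yields $\partial_\omega[M(\phi_{\omega,c})]=\langle M'(\phi_{\omega,c}),\partial_\omega\phi_{\omega,c}\rangle$ and the analogous identities for $\partial_c$ and for $P$; the only slightly delicate point is the integration by parts needed to confirm $\partial_\omega[P(\phi_{\omega,c})]=\langle i\partial_x\phi_{\omega,c},\partial_\omega\phi_{\omega,c}\rangle$. Expanding $\psi=\mu\partial_\omega\phi_{\omega,c}+\nu\partial_c\phi_{\omega,c}$ then gives
$$\langle M'(\phi_{\omega,c}),\psi\rangle=\mu\,\partial_\omega M(\phi_{\omega,c})+\nu\,\partial_c M(\phi_{\omega,c}),\qquad \langle P'(\phi_{\omega,c}),\psi\rangle=\mu\,\partial_\omega P(\phi_{\omega,c})+\nu\,\partial_c P(\phi_{\omega,c}),$$
and both right-hand sides vanish by precisely the two lines of \eqref{0'}.

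Next I would prove the operator identity in \eqref{Q'0}. The key observation is that $S'_{\omega+\lambda\mu,c+\lambda\nu}(\phi_{\omega+\lambda\mu,c+\lambda\nu})\equiv 0$ as a function of $\lambda$, by \eqref{S'wc} applied at each parameter value. Writing $S'_{\omega',c'}(u)=E'(u)+\omega'M'(u)+c'P'(u)$ and differentiating this identity at $\lambda=0$, I keep track of the two sources of $\lambda$-dependence: the implicit dependence through $\phi_{\omega',c'}$, whose derivative assembles into $E''(\phi_{\omega,c})\psi+\omega M''(\phi_{\omega,c})\psi+cP''(\phi_{\omega,c})\psi=S''_{\omega,c}(\phi_{\omega,c})\psi$, and the explicit dependence of the coefficients, which contributes $\mu M'(\phi_{\omega,c})+\nu P'(\phi_{\omega,c})=Q'_{\mu,\nu}(\phi_{\omega,c})$. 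Setting the total $\lambda$-derivative equal to zero yields $S''_{\omega,c}(\phi_{\omega,c})\psi=-Q'_{\mu,\nu}(\phi_{\omega,c})$.

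Finally, the scalar identity $\langle S''_{\omega,c}(\phi_{\omega,c})\psi,\psi\rangle=0$ is immediate: pairing the operator identity just obtained with $\psi$ gives $-\mu\langle M'(\phi_{\omega,c}),\psi\rangle-\nu\langle P'(\phi_{\omega,c}),\psi\rangle$, which vanishes by \eqref{M'P'}. The main thing to be careful about is not any single estimate but the justification that $(\omega,c)\mapsto\phi_{\omega,c}$ is $C^2$ into $H^1(\R)$, so that the $\lambda$-differentiations and chain-rule identities are legitimate, together with the correct separation of explicit versus implicit $\lambda$-dependence in the second step; once this is granted, everything follows by linearity and the definition of $Q_{\mu,\nu}$.
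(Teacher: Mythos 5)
Your proposal is correct and follows essentially the same route as the paper: \eqref{M'P'} from the chain rule plus the eigenvector relations \eqref{0'}, the operator identity by differentiating $S'_{\omega+\lambda\mu,c+\lambda\nu}(\phi_{\omega+\lambda\mu,c+\lambda\nu})=0$ in $\lambda$, and the scalar identity by pairing with $\psi$. The only cosmetic difference is that the paper gets $\langle Q'_{\mu,\nu}(\phi_{\omega,c}),\psi\rangle=0$ directly from \eqref{0} rather than from \eqref{M'P'}, which is the same fact by the symmetry $\partial_c M=\partial_\omega P$.
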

\begin{proof}
By \eqref{0'}, we get
\begin{equation*}
\left\{ \aligned
&\langle M'(\phi_{\omega, c}), \mu\partial_\omega\phi_{\omega, c}+\nu\partial_c\phi_{\omega, c}\rangle=0,\\
&\langle P'(\phi_{\omega, c}), \mu\partial_\omega\phi_{\omega, c}+\nu\partial_c\phi_{\omega, c})\rangle=0,
\endaligned
\right.
\end{equation*}
that is,
$$\langle M'(\phi_{\omega, c}), \psi\rangle=\langle P'(\phi_{\omega, c}), \psi\rangle=0.$$
From \eqref{0}, we have
\begin{equation*}
\left\{ \aligned
&\langle\mu M'(\phi_{\omega, c})+\nu P'(\phi_{\omega, c}), \partial_\omega\phi_{\omega, c}\rangle=0,\\
&\langle\mu M'(\phi_{\omega, c})+\nu P'(\phi_{\omega, c}), \partial_c\phi_{\omega, c}\rangle=0.
\endaligned
\right.
\end{equation*}
Therefore, we have
$$\langle Q'_{\mu, \nu}(\phi_{\omega, c}), \psi\rangle=0.$$
On the other hand, differentiating	$S'_{\omega+\lambda\mu, c+\lambda\nu}(\phi_{\omega+\lambda\mu,c+\lambda\nu})=0$ with respect to $\lambda=0$, we have
\begin{align*}
S''_{\omega+\lambda\mu, c+\lambda\nu}(\phi_{\omega+\lambda\mu,c+\lambda\nu})&\partial_\lambda\phi_{\omega+\lambda\mu,c+\lambda\nu}\big|_{\lambda=0}\\
=&-\big[\mu M'(\phi_{\omega+\lambda\mu,c+\lambda\nu})+\nu P'(\phi_{\omega+\lambda\mu,c+\lambda\nu})\big]\big|_{\lambda=0}.
\end{align*}
That is
$$S''_{\omega, c}(\phi_{\omega, c})\psi=-Q'_{\mu, \nu}(\phi_{\omega, c}).$$
Thus, we have
$$\quad \big\langle S''_{\omega, c}(\phi_{\omega, c})\psi,\psi\big\rangle=-\langle Q'_{\mu, \nu}(\phi_{\omega, c}), \psi\rangle=0.$$
This proves the lemma.
\end{proof}

\begin{lem}\label{J'psi}
Let $1<\sigma<2$. If $c=2z_0\sqrt\omega$,  then
\begin{align*}
\langle J'(\phi_{\omega, c}), \psi \rangle=4\mu M(\phi_{\omega, c})+2\nu P(\phi_{\omega, c})\neq 0.
\end{align*}
\end{lem}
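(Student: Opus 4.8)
The plan is to read off the stated identity by differentiating the algebraic relation \eqref{J} along the critical direction $(\mu,\nu)$, and then to deduce the nonvanishing from the explicit values supplied by Lemma \ref{Apendix}.

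First I would view \eqref{J}, namely $J(\phi_{\omega,c})=4\omega M(\phi_{\omega,c})+2cP(\phi_{\omega,c})$, as an identity between smooth functions of the parameters $(\omega,c)$ on the region $c^2<4\omega$, and apply the directional-derivative operator $\mu\partial_\omega+\nu\partial_c$ to both sides. On the left, the chain rule together with $\psi=\mu\partial_\omega\phi_{\omega,c}+\nu\partial_c\phi_{\omega,c}$ produces exactly $\langle J'(\phi_{\omega,c}),\psi\rangle$. On the right, expanding the product derivatives gives
\begin{align*}
(\mu\partial_\omega+\nu\partial_c)\big[4\omega M+2cP\big]
=4\mu M+2\nu P+4\omega\big(\mu\partial_\omega M+\nu\partial_c M\big)+2c\big(\mu\partial_\omega P+\nu\partial_c P\big),
\end{align*}
all functionals being evaluated at $\phi_{\omega,c}$. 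The key point is that the last two grouped terms vanish: since $c=2z_0\sqrt\omega$ makes $\det d''(\omega,c)=0$, the vector $(\mu,\nu)$ is the zero eigenvector and the system \eqref{0'} gives precisely $\mu\partial_\omega M+\nu\partial_c M=0$ and $\mu\partial_\omega P+\nu\partial_c P=0$. Hence the right-hand side collapses to $4\mu M(\phi_{\omega,c})+2\nu P(\phi_{\omega,c})$, which is the claimed equality.

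It remains to verify the strict inequality, and this is where the borderline structure enters. Using Lemma \ref{Apendix} I would substitute $P(\phi_{\omega,c})=a_0M(\phi_{\omega,c})$ with $a_0=(\sigma-1)\sqrt\omega$ and $\mu=\sqrt\omega\,\nu$, obtaining
\begin{align*}
4\mu M(\phi_{\omega,c})+2\nu P(\phi_{\omega,c})
=2\nu M(\phi_{\omega,c})\big(2\sqrt\omega+a_0\big)
=2(\sigma+1)\sqrt\omega\,\nu\,M(\phi_{\omega,c}).
\end{align*}
Since $\sigma>1$, $\omega>0$ and $M(\phi_{\omega,c})=\tfrac12\|\phi_{\omega,c}\|_{L^2}^2>0$, and since the eigenvector $(\mu,\nu)$ is nontrivial with $\mu/\nu=\sqrt\omega$ finite so that $\nu\neq0$, this quantity is nonzero.

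The computation is short, so the only real care lies in its two structural inputs. The main obstacle, to the extent there is one, is the nonvanishing: it is invisible from the algebraic identity alone and genuinely relies on the borderline relations $P=(\sigma-1)\sqrt\omega\,M$ and $\mu/\nu=\sqrt\omega$, whose combination turns the factor $2\sqrt\omega+a_0$ into $(\sigma+1)\sqrt\omega>0$. As a consistency check one could instead compute $\langle J'(\phi_{\omega,c}),\psi\rangle$ via \eqref{J'} and the self-adjointness of $S''_{\omega,c}(\phi_{\omega,c})$, using $S''_{\omega,c}(\phi_{\omega,c})\psi=-Q'_{\mu,\nu}(\phi_{\omega,c})$ from \eqref{Q'0} together with $M'(\phi_{\omega,c})=\phi_{\omega,c}$ and $P'(\phi_{\omega,c})=i\partial_x\phi_{\omega,c}$; this route yields $\tfrac{2(\sigma+1)}{\sigma}\big(\mu M+\nu P\big)$, which equals $4\mu M+2\nu P$ after invoking Lemma \ref{Apendix} but does not exhibit the clean form directly, so I would present the differentiation argument as the main line.
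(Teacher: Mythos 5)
Your proposal is correct and follows essentially the same route as the paper: the paper also obtains the identity by differentiating \eqref{J} along the line $(\omega+\lambda\mu,c+\lambda\nu)$ at $\lambda=0$ (which is exactly your directional derivative $\mu\partial_\omega+\nu\partial_c$), kills the extra terms using \eqref{M'P'} (the paper's restatement of \eqref{0'}), and concludes nonvanishing from $P=a_0M$ and $\mu/\nu=\sqrt\omega$ via the factor $\nu(4\sqrt\omega+2a_0)M(\phi_{\omega,c})$. No gaps.
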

\begin{proof}
Note that $\psi=\partial_\lambda \phi_{\omega+\lambda\mu, c+\lambda\nu}\lvert_{\lambda=0}$, using \eqref{J}, we can write
\begin{align*}
\langle J'(\phi_{\omega, c}), \psi \rangle =&\partial_\lambda J(\phi_{\omega+\lambda\mu, c+\lambda\nu})\lvert_{\lambda=0}\\
=&\partial_\lambda\big[4(\omega+\lambda\mu)M(\phi_{\omega+\lambda\mu, c+\lambda\nu})+2(c+\lambda\nu)P(\phi_{\omega+\lambda\mu, c+\lambda\nu})\big]\big\lvert_{\lambda=0}\\
=&4\mu M(\phi_{\omega, c})+2\nu P(\phi_{\omega, c})
+\langle M'(\phi_{\omega, c}), \partial_\lambda\phi_{\omega+\lambda\mu, c+\lambda\nu}\lvert_{\lambda=0} \rangle\\
&+\langle P'(\phi_{\omega, c}), \partial_\lambda\phi_{\omega+\lambda\mu, c+\lambda\nu}\lvert_{\lambda=0} \rangle .
\end{align*}
When $c=2z_0\sqrt\omega$, together with \eqref{M'P'}, we obtain
\begin{align*}
\langle J'(\phi_{\omega, c}), \psi \rangle
=&4\mu M(\phi_{\omega, c})+2\nu P(\phi_{\omega, c})+\langle M'(\phi_{\omega, c}), \psi \rangle+\langle P'(\phi_{\omega, c}), \psi \rangle\\
=&4\mu M(\phi_{\omega, c})+2\nu P(\phi_{\omega, c}).
\end{align*}
Using Lemma \ref{Apendix}, we find
$$
4\mu M(\phi_{\omega, c})+2\nu P(\phi_{\omega, c})=\nu \big(4\sqrt \omega+2a_0\big)M(\phi_{\omega, c})
\neq 0.
$$
Hence, $\langle J'(\phi_{\omega, c}), \psi \rangle\neq 0$.
This finishes the proof.
\end{proof}

\section{Modulation and the coercivity property}

\begin{prop}\label{Modulation}
There exists $\delta_0>0$, such that for any $\delta \in (0, \delta_0)$, $u\in U_\delta(\phi_{\omega,c})$, the following properties is verified. There exist $C^1$-functions
$$(\theta, y, \lambda):U_\delta(\phi_{\omega,c}) \rightarrow \R\times\R\times\R^+,$$
such that if we define $\varepsilon(t)$ by
\begin{align}\label{modulation-u}
\varepsilon(t)=e^{-i\theta(t)}u(t,\,\cdot+y(t))-\phi_{\omega+\lambda(t)\mu, c+\lambda(t)\nu},
\end{align}
then $\varepsilon$ satisfies the following orthogonality conditions for any $t\in\R$,
\begin{align*}
\langle \varepsilon, i\phi_{\omega+\lambda\mu,c+\lambda\nu}\rangle
=\langle \varepsilon, \partial_x\phi_{\omega+\lambda\mu,c+\lambda\nu} \rangle
=\langle \varepsilon, J'(\phi_{\omega+\lambda\mu,c+\lambda\nu})\rangle=0.
\end{align*}
Moreover,
\begin{align}\label{coercivity}
\big\langle S''_{\omega+\lambda\mu, c+\lambda\nu}(\phi_{\omega+\lambda\mu, c+\lambda\nu})\varepsilon, \varepsilon\big\rangle
\gtrsim\|\varepsilon\|_{\H1}^2.
\end{align}
\end{prop}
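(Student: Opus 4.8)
\textbf{Proof proposal for Proposition \ref{Modulation}.}

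The plan is to split the statement into two separate parts: first the existence of the modulation parameters $(\theta,y,\lambda)$ satisfying the three orthogonality conditions, and second the coercivity estimate \eqref{coercivity}. For the modulation part I would set up an implicit function theorem argument. Define the map
$$
G:H^1(\R)\times\R\times\R\times\R^+\to\R^3,\qquad
G(u,\theta,y,\lambda)=\big(\langle \varepsilon, i\phi_\lambda\rangle,\ \langle \varepsilon, \partial_x\phi_\lambda\rangle,\ \langle \varepsilon, J'(\phi_\lambda)\rangle\big),
$$
where I abbreviate $\phi_\lambda=\phi_{\omega+\lambda\mu,c+\lambda\nu}$ and $\varepsilon=e^{-i\theta}u(\cdot+y)-\phi_\lambda$. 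At the base point $u=\phi_{\omega,c}$, $(\theta,y,\lambda)=(0,0,0)$ one has $\varepsilon=0$, so $G=0$ there. I would then compute the $3\times3$ Jacobian of $G$ in the variables $(\theta,y,\lambda)$ at this base point. The $\theta$-derivative of $\varepsilon$ produces $-i\phi_{\omega,c}$, the $y$-derivative produces $-\partial_x\phi_{\omega,c}$, and the $\lambda$-derivative produces $-\psi$. Pairing these against the three test functions $i\phi_{\omega,c}$, $\partial_x\phi_{\omega,c}$, $J'(\phi_{\omega,c})$ gives the Jacobian. Its invertibility is the crux of this part: I expect the diagonal-type entries $\|\phi_{\omega,c}\|_{L^2}^2$, $\|\partial_x\phi_{\omega,c}\|_{L^2}^2$ to be nonzero, the entry $\langle \psi, J'(\phi_{\omega,c})\rangle=\langle J'(\phi_{\omega,c}),\psi\rangle\neq0$ to be supplied exactly by Lemma \ref{J'psi}, and the off-diagonal entries to vanish or be controllable by symmetry/parity of $\phi_{\omega,c}$. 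Once the Jacobian is nonsingular, the implicit function theorem yields the $C^1$ maps on a small neighborhood $U_{\delta_0}$.

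For the coercivity \eqref{coercivity}, the key structural input is Lemma \ref{critical}, which tells us that at the critical frequency $\psi$ lies in the kernel-direction of $S''$ in the sense that $\langle S''\psi,\psi\rangle=0$, i.e. the quadratic form $S''_{\omega,c}(\phi_{\omega,c})$ is degenerate along $\psi$. The standard Weinstein/Grillakis–Shatah–Strauss picture is that $S''$ has one negative eigenvalue and a kernel spanned by the symmetry directions $i\phi_{\omega,c}$ and $\partial_x\phi_{\omega,c}$, and is positive on the complement. At the degenerate frequency the would-be positive part touches zero along $\psi$. The role of the three orthogonality conditions is precisely to project $\varepsilon$ onto the subspace on which the form is strictly positive: the conditions $\langle\varepsilon,i\phi_\lambda\rangle=\langle\varepsilon,\partial_x\phi_\lambda\rangle=0$ remove the kernel directions coming from phase and translation, while the third condition $\langle\varepsilon,J'(\phi_\lambda)\rangle=0$ is the one engineered to kill the dangerous $\psi$-direction. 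Indeed, since $J'(\phi_{\omega,c})=-\tfrac{\sigma+1}{\sigma}S''_{\omega,c}(\phi_{\omega,c})\phi_{\omega,c}$ by \eqref{J'} and $S''\psi=-Q'_{\mu,\nu}(\phi_{\omega,c})$ by \eqref{Q'0}, the constraint against $J'(\phi_\lambda)$ translates into a constraint making $\varepsilon$ transverse to $\psi$ with a nonzero pairing, which is exactly why Lemma \ref{J'psi} requires $\langle J'(\phi_{\omega,c}),\psi\rangle\neq0$.

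Concretely I would prove coercivity by contradiction combined with a spectral decomposition. Suppose \eqref{coercivity} fails; then there is a sequence $\varepsilon_n$ with $\|\varepsilon_n\|_{H^1}=1$, satisfying the three orthogonality conditions, with $\langle S''\varepsilon_n,\varepsilon_n\rangle\to 0$. Decompose each $\varepsilon_n$ along the negative/zero/positive spectral subspaces of $S''$; using that the form is nonnegative on the orthogonal complement of the negative eigendirection and vanishes exactly on the kernel, one extracts a weak limit $\varepsilon_\infty$ lying in the kernel of $S''$, hence in the span of $\{i\phi_{\omega,c},\partial_x\phi_{\omega,c},\psi\}$. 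The orthogonality conditions force $\varepsilon_\infty=0$, and then a standard argument upgrades weak to strong convergence to contradict $\|\varepsilon_n\|_{H^1}=1$. The whole scheme extends to nearby parameters $(\omega+\lambda\mu,c+\lambda\nu)$ for small $\lambda$ by continuity of the spectral picture, so the estimate holds uniformly on $U_{\delta_0}$. I expect the main obstacle to be controlling the zero eigenvalue: at the critical frequency the kernel of $S''$ is larger than the generic symmetry-induced kernel because $\psi$ joins it, so the positivity on the complement is only marginal and I must verify that the third orthogonality condition genuinely restores a strictly positive lower bound rather than merely a nonnegative one, which is where the precise non-vanishing in Lemma \ref{J'psi} and the sign information from Lemma \ref{Apendix} must be used carefully.
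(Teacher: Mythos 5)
Your treatment of the modulation part follows the paper's route: the same implicit--function--theorem setup, the same Jacobian in $(\theta,y,\lambda)$, with invertibility reduced to Lemma \ref{J'psi} because the third row of the Jacobian is $(0,0,-\langle J'(\phi_{\omega,c}),\psi\rangle)$. One correction: the off-diagonal entries of the upper-left $2\times2$ block do \emph{not} vanish by parity --- they equal $\mp 2P(\phi_{\omega,c})$, which is nonzero at the critical frequency since $P(\phi_{\omega,c})=(\sigma-1)\sqrt{\omega}\,M(\phi_{\omega,c})$ --- and the nondegeneracy of that block is the computation $\|\phi_{\omega,c}\|_{L^2}^2\|\partial_x\phi_{\omega,c}\|_{L^2}^2-4P(\phi_{\omega,c})^2=4\sigma(2-\sigma)\omega\, M(\phi_{\omega,c})^2>0$, which genuinely uses $\sigma<2$ together with Lemmas \ref{JL} and \ref{Apendix}.

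The coercivity part has a genuine gap. Your contradiction argument passes from $\langle S''\varepsilon_n,\varepsilon_n\rangle\to0$ to ``the weak limit lies in the kernel,'' but that step is valid only if you already know the quadratic form is nonnegative on the constrained subspace --- and the three orthogonality conditions do not include orthogonality to the negative eigendirection $g_{-1}$, so nonnegativity there is exactly what must be proved, not assumed. The mechanism the paper uses, and which is absent from your proposal, is the identity \eqref{J'}: $J'(\phi_{\omega,c})=-\frac{\sigma+1}{\sigma}S''_{\omega,c}(\phi_{\omega,c})\phi_{\omega,c}$, so the third orthogonality condition reads $\langle\varepsilon,S''_{\omega,c}(\phi_{\omega,c})h\rangle=0$ with $h=-\frac{\sigma+1}{\sigma}\phi_{\omega,c}$, and this $h$ is a \emph{negative} direction: $\langle S''_{\omega,c}(\phi_{\omega,c})h,h\rangle=-\frac{2(\sigma+1)^2}{\sigma}J(\phi_{\omega,c})<0$ by \eqref{J} and Lemma \ref{Apendix}. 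Since $S''_{\omega,c}(\phi_{\omega,c})$ has exactly one negative eigenvalue, a Cauchy--Schwarz argument in the $S''$-inner product on the positive subspace then bounds the $g_{-1}$-component of $\varepsilon$ and yields \eqref{coercivity}. You instead attribute the role of the third condition to removing ``the $\psi$-direction of the kernel''; but $\psi\notin\ker S''_{\omega,c}(\phi_{\omega,c})$ --- by \eqref{Q'0} one has $S''_{\omega,c}(\phi_{\omega,c})\psi=-Q'_{\mu,\nu}(\phi_{\omega,c})\neq0$, and the degeneracy $\langle S''\psi,\psi\rangle=0$ only afflicts the subspace orthogonal to $M'(\phi_{\omega,c})$ and $P'(\phi_{\omega,c})$, which is not the constrained subspace here. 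So the object your argument must control (the negative eigendirection) is never controlled, while the object you do discuss ($\psi$) is not in the kernel; this is a missing idea rather than a technical omission.
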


\begin{proof}
The proof of the proposition can be split into the following  three steps.

Step 1, modulation for fixed time.  Fixing $t\in\R$, let $\overrightarrow F(\theta, y, \lambda; u)=(F_1, F_2, F_3)$ with
\begin{align*}
F_1(\theta, y, \lambda; u)=&\langle \varepsilon, i\phi_{\omega+\lambda\mu,c+\lambda\nu}\rangle ,\\
F_2(\theta, y, \lambda; u)=&\langle \varepsilon, \partial_x\phi_{\omega+\lambda\mu,c+\lambda\nu} \rangle,\\
F_3(\theta, y, \lambda; u)=&\langle \varepsilon, J'(\phi_{\omega+\lambda\mu,c+\lambda\nu})\rangle.
\end{align*}
Note that
\begin{align*}
\partial_\theta\varepsilon\big|_{(0,0,0; \phi_{\omega, c})}=-i\phi_{\omega, c};\quad
\partial_y\varepsilon\big|_{(0,0,0; \phi_{\omega, c})}=\partial_x\phi_{\omega, c};\quad
\partial_\lambda\varepsilon\big|_{(0,0,0; \phi_{\omega, c})}=-\psi.
\end{align*}
Then, the Jacobian matrix of the derivative of the function $(\theta, y, \lambda; u)\mapsto \overrightarrow F$ with respect to $(\theta,y,\lambda)$ is as follows.
\begin{align*}
D\overrightarrow F\Big|_{(0,0,0; \phi_{\omega, c})}
&=\begin{pmatrix}
\partial_\theta F_1&\partial_y F_1&\partial_\lambda F_1\\
\partial_\theta F_2&\partial_y F_2&\partial_\lambda F_2\\
\partial_\theta F_3&\partial_y F_3&\partial_\lambda F_3\\
\end{pmatrix}\Biggl|_{(0,0,0; \phi_{\omega, c})}\notag\\
&=\begin{pmatrix}
-\|\phi_{\omega, c}\|_{L^2}^2&-2P(\phi_{\omega, c})&-\langle \psi, i\phi_{\omega, c}\rangle\\
2P(\phi_{\omega, c})&\|\partial_x\phi_{\omega, c}\|_{L^2}^2&-\langle \psi, \partial_x\phi_{\omega, c}\rangle\\
0&0&-\langle J'(\phi_{\omega,c}), \psi\rangle
\end{pmatrix}.
\end{align*}
Thus, we can get
$$\det{(D\overrightarrow F)}\Big|_{(0,0,0; \phi_{\omega, c})}=4\sigma(2-\sigma)\omega [M(\phi_{\omega, c})]^2\langle J'(\phi_{\omega,c}), \psi\rangle.$$
From Lemma \ref{J'psi}, we have
$$\det{(D\overrightarrow F)}\Big|_{(0,0,0; \phi_{\omega, c})}\neq0.$$
Therefore, the implicit function theorem implies that there exists $\delta_0>0$, such that for any $\delta \in (0, \delta_0)$, $u\in U_\delta(\phi_{\omega,c})$, the following properties is verified. There exist continuity functions
$$(\theta, y, \lambda):U_\delta(\phi_{\omega,c}) \rightarrow \R\times\R\times\R^+,$$
such that $F_j(\theta, y, \lambda; u)=0$, $j=1, 2, 3.$

Step 2, the regularity of the parameters in time. The parameters $(\theta, y, \lambda)\in C^1_t$ can be followed from the regularization arguments.

Step 3, the coercivity property of $S''_{\omega, c}(\phi_{\omega, c})$. From \cite{LiSiSu1} Theorem 3.1, we obtain that $S''_{\omega, c}(\phi_{\omega, c})$ has exactly one negative eigenvalue. Hence there exists only one $\lambda_{-1}<0$, such that,
$$S''_{\omega, c}(\phi_{\omega, c})g_{-1}=\lambda_{-1}g_{-1}, \quad \|g_{-1}\|_{L^2}=1.$$
Moreover, we have the following decomposition,
$$\varepsilon=a_{-1}g_{-1}+a_1i\phi_{\omega, c}+a_2\partial_x\phi_{\omega, c}+h_1,$$
with $$\langle h_1, g_{-1}\rangle=\langle h_1, i\phi_{\omega, c}\rangle=\langle h_1,\partial_x\phi_{\omega, c}\rangle=0,$$ and
$$\langle S''_{\omega, c}(\phi_{\omega, c})h_1, h_1\rangle\gtrsim\|h_1\|_{\H1}^2.$$
Since $\langle \varepsilon, i\phi_{\omega,c}\rangle=\langle \varepsilon, \partial_x\phi_{\omega,c}\rangle=0$, we have $a_1=a_2=0$. Then, we can write
$$\varepsilon=a_{-1}g_{-1}+h_1.$$
Next, using \eqref{J'}, we have
$$J'(\phi_{\omega,c})=S''_{\omega,c}(\phi_{\omega,c})(-\frac{\sigma+1}{\sigma}\phi_{\omega,c}).$$
For convenience, we put $$h=-\frac{\sigma+1}{\sigma}\phi_{\omega,c}.$$
Note that $\langle h, i\phi_{\omega,c}\rangle=\langle h, \partial_x\phi_{\omega,c}\rangle=0$, we can also write that
$$h=b_{-1}g_{-1}+h_2$$
with $$\langle h_2, g_{-1}\rangle=\langle h_2, i\phi_{\omega, c}\rangle=\langle h_2,\partial_x\phi_{\omega, c}\rangle=0,$$ and
$$\langle S''_{\omega, c}(\phi_{\omega, c})h_2, h_2 \rangle\gtrsim\|h_2\|_{\H1}^2.$$
For simplicity, we denote
\begin{align*}
\gamma=-\langle S''_{\omega,c}(\phi_{\omega,c})h,h\rangle=-\langle J'(\phi_{\omega,c}), h\rangle=\frac{2(\sigma+1)^2}{\sigma}J(\phi_{\omega,c}).
\end{align*}
Then, from \eqref{J} and Lemma \ref{Apendix}, we know $\gamma>0$.

Moreover, we have
\begin{align}
\big\langle S''_{\omega,c}(\phi_{\omega, c})\varepsilon, \varepsilon\big\rangle=&\lambda_{-1}a_{-1}^2+\langle S''_{\omega, c}(\phi_{\omega, c})h_1, h_1\rangle,\label{B}\\
\big\langle S''_{\omega,c}(\phi_{\omega, c})h, h\big\rangle=&\lambda_{-1}b_{-1}^2+\langle S''_{\omega, c}(\phi_{\omega, c})h_2, h_2\rangle=-\gamma<0.\label{A}
\end{align}
According to the orthogonality condition $\langle \varepsilon, J'(\phi_{\omega,c})\rangle=0$ and some direct computations, we have
\begin{align}\label{h1h2}
\lambda_{-1}a_{-1}b_{-1}+\langle S''_{\omega, c}(\phi_{\omega, c})h_1, h_2\rangle=0.
\end{align}
Together with \eqref{B}, \eqref{A}, \eqref{h1h2} and the Cauchy-Schwartz inequlity, we obtain that
\begin{align}
-\lambda_{-1}a_{-1}^2=&\frac{\lambda_{-1}^2a_{-1}^2b_{-1}^2}{-\lambda_{-1}b_{-1}^2}=\frac{\langle S''_{\omega, c}(\phi_{\omega, c})h_1, h_2\rangle^2}{-\lambda_{-1}b_{-1}^2}\notag\\
\leq&\frac{\langle S''_{\omega, c}(\phi_{\omega, c})h_1, h_2\rangle^2}{\gamma+\langle S''_{\omega, c}(\phi_{\omega, c})h_2, h_2\rangle}
\leq \frac{\langle S''_{\omega, c}(\phi_{\omega, c})h_1, h_1\rangle \langle S''_{\omega, c}(\phi_{\omega, c})h_2, h_2\rangle}{\gamma+\langle S''_{\omega, c}(\phi_{\omega, c})h_2, h_2\rangle}.\label{14.15}
\end{align}
Thus, we get
\begin{align*}
\big\langle S''_{\omega,c}(\phi_{\omega, c})\varepsilon, \varepsilon\big\rangle\geq&-\frac{\langle S''_{\omega, c}(\phi_{\omega, c})h_1, h_1\rangle \langle S''_{\omega, c}(\phi_{\omega, c})h_2, h_2\rangle}{\gamma+\langle S''_{\omega, c}(\phi_{\omega, c})h_2, h_2\rangle}+\langle S''_{\omega, c}(\phi_{\omega, c})h_1, h_1\rangle\\
=&\frac{\gamma}{\gamma+\langle S''_{\omega, c}(\phi_{\omega, c})h_2, h_2\rangle}\langle S''_{\omega, c}(\phi_{\omega, c})h_1, h_1\rangle\\
\gtrsim&\|h_1\|_{\H1}^2.
\end{align*}
By \eqref{14.15} and H\"{o}lder's inequality,  we have
$$a_{-1}^2\lesssim\|h_1\|_{\H1}^2.$$
Hence, by $\varepsilon=a_{-1}g_{-1}+h_1$, we have
$$\|\varepsilon\|_{L^2}^2\lesssim a_{-1}^2+\|h_1\|_{\H1}^2\lesssim\|h_1\|_{\H1}^2.$$
Since,
$$\big\langle S''_{\omega,c}(\phi_{\omega, c})\varepsilon, \varepsilon\big\rangle\gtrsim\|h_1\|_{\H1}^2\gtrsim\|\varepsilon\|_{L^2}^2.$$
From the definition of $S''_{\omega,c}(\phi_{\omega,c})$ in \eqref{S''}, we have
$$\|\varepsilon\|_{\dot H^1}^2\lesssim \big\langle S''_{\omega,c}(\phi_{\omega, c})\varepsilon, \varepsilon\big\rangle+\|\varepsilon\|_{L^2}^2.$$
Therefore, we get that
$$\big\langle S''_{\omega, c}(\phi_{\omega, c})\varepsilon, \varepsilon\big\rangle
\gtrsim\|\varepsilon\|_{\H1}^2.$$
This finishes the proof of the proposition.
\end{proof}

\begin{lem}\label{doty}
There exists $C_{\omega,c}\in \R$, such that
\begin{align}\label{est-derivative}
\dot y-c-\lambda\nu=C_{\omega,c}\dot\lambda+\big\langle S''_{\omega, c}(\phi_{\omega, c})\frac{-a_0\phi_{\omega, c}+i\partial_x\phi_{\omega, c}}{2(a_0^2-\omega)M(\phi_{\omega, c})}, \varepsilon\big\rangle+O(\lambda\|\varepsilon\|_{\H1}+\|\varepsilon\|^2_{\H1}),
\end{align}
and
\begin{align}\label{dotl}
\dot\theta-\omega-\lambda\mu
=O(\|\varepsilon\|_{\H1}),\quad \dot\lambda=O(\|\varepsilon\|_{\H1}).
\end{align}
Here the parameters $\theta$, $y$, $\lambda$ are given by Proposition \ref{Modulation}.
\end{lem}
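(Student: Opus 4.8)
The plan is to record the equation satisfied by $\varepsilon$ and then differentiate the three orthogonality conditions of Proposition \ref{Modulation} in time, turning them into a linear system for the modulation speeds. Setting $w(t)=e^{-i\theta(t)}u(t,\cdot+y(t))$, equation \eqref{eqs:gDNLS} becomes $i\partial_t w=E'(w)+\dot\theta\,M'(w)+\dot y\,P'(w)$. Writing $\Phi_\lambda:=\phi_{\omega+\lambda\mu,c+\lambda\nu}$, $\Psi_\lambda:=\partial_\lambda\Phi_\lambda$ and $w=\Phi_\lambda+\varepsilon$, I would Taylor expand $E'$, use $S'_{\omega+\lambda\mu,c+\lambda\nu}(\Phi_\lambda)=0$ and the fact that $M,P$ are quadratic, to arrive at
\[
\partial_t\varepsilon=-iS''_{\omega+\lambda\mu,c+\lambda\nu}(\Phi_\lambda)\varepsilon-\dot\lambda\,\Psi_\lambda-i(\dot\theta-\omega-\lambda\mu)(\Phi_\lambda+\varepsilon)+(\dot y-c-\lambda\nu)(\partial_x\Phi_\lambda+\partial_x\varepsilon)-i\mathcal N(\varepsilon),
\]
where $\mathcal N(\varepsilon)=O(\|\varepsilon\|_{\H1}^2)$ collects the quadratic and higher part of the nonlinearity.

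Next I would differentiate each condition $\langle\varepsilon,g_j\rangle=0$ in time, with $g_1=i\Phi_\lambda$, $g_2=\partial_x\Phi_\lambda$, $g_3=J'(\Phi_\lambda)$, which gives $\langle\partial_t\varepsilon,g_j\rangle+\dot\lambda\langle\varepsilon,\partial_\lambda g_j\rangle=0$. Substituting the formula above produces a $3\times3$ linear system for the triple $(\dot\theta-\omega-\lambda\mu,\,\dot y-c-\lambda\nu,\,\dot\lambda)$, whose inhomogeneous part is linear in $\varepsilon$ through the pairings $\langle S''_{\omega,c}(\phi_{\omega,c})\varepsilon,g_j\rangle$ modulo $O(\|\varepsilon\|_{\H1}^2)$; the quadratic contributions come from the $\dot\lambda\langle\varepsilon,\partial_\lambda g_j\rangle$ terms and from the $(\dot\theta-\omega-\lambda\mu)\langle\varepsilon,\cdot\rangle$ and $(\dot y-c-\lambda\nu)\langle\partial_x\varepsilon,\cdot\rangle$ terms once the crude bounds are available. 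At $\lambda=0,\varepsilon=0$ the coefficient matrix is exactly $D\overrightarrow F|_{(0,0,0;\phi_{\omega,c})}$ of Proposition \ref{Modulation}, whose determinant $4\sigma(2-\sigma)\omega[M(\phi_{\omega,c})]^2\langle J'(\phi_{\omega,c}),\psi\rangle$ is nonzero by Lemma \ref{J'psi}; so the system is invertible for $\delta$ small, and since its source is $O(\|\varepsilon\|_{\H1})$ the solution satisfies $\dot\theta-\omega-\lambda\mu=O(\|\varepsilon\|_{\H1})$ and $\dot\lambda=O(\|\varepsilon\|_{\H1})$, which is \eqref{dotl}.

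For the sharp identity \eqref{est-derivative} I would instead keep $\dot\lambda$ as an explicit unknown and isolate $\dot y-c-\lambda\nu$ from the $2\times2$ subsystem coming from $g_1,g_2$ alone. Evaluating the pairings at $\lambda=0$ with $\langle i\phi_{\omega,c},i\phi_{\omega,c}\rangle=2M(\phi_{\omega,c})$, $\langle\partial_x\phi_{\omega,c},i\phi_{\omega,c}\rangle=-2P(\phi_{\omega,c})$ and $\|\partial_x\phi_{\omega,c}\|_{L^2}^2=2\omega M(\phi_{\omega,c})$ (Lemma \ref{JL}), the $2\times2$ coefficient matrix of $(\dot\theta-\omega-\lambda\mu,\,\dot y-c-\lambda\nu)$ is $\left(\begin{smallmatrix}-2M&-2P\\ 2P&2\omega M\end{smallmatrix}\right)$ with determinant $4(P^2-\omega M^2)=4(a_0^2-\omega)M^2$, using $P(\phi_{\omega,c})=a_0M(\phi_{\omega,c})$ from Lemma \ref{Apendix}; this is precisely the denominator appearing in \eqref{est-derivative}. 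Solving by Cramer's rule, the $\dot\lambda$ part assembles into a single term $C_{\omega,c}\dot\lambda$, while the $\varepsilon$ part equals
\[
\frac{1}{2(a_0^2-\omega)M(\phi_{\omega,c})}\Big(\langle\varepsilon,S''_{\omega,c}(\phi_{\omega,c})(i\partial_x\phi_{\omega,c})\rangle-a_0\langle\varepsilon,S''_{\omega,c}(\phi_{\omega,c})\phi_{\omega,c}\rangle\Big).
\]
By the self-adjointness of $S''_{\omega,c}(\phi_{\omega,c})$ and linearity this is exactly $\big\langle S''_{\omega,c}(\phi_{\omega,c})\tfrac{-a_0\phi_{\omega,c}+i\partial_x\phi_{\omega,c}}{2(a_0^2-\omega)M(\phi_{\omega,c})},\varepsilon\big\rangle$, and the deviations incurred by evaluating coefficients at $\lambda\neq0$ and by $\mathcal N(\varepsilon)$ are $O(\lambda\|\varepsilon\|_{\H1}+\|\varepsilon\|_{\H1}^2)$, yielding \eqref{est-derivative}.

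The main obstacle I anticipate is the error bookkeeping in this last step: one must check that each coefficient pairing depends on $\lambda$ in a Lipschitz manner, so that replacing it by its $\lambda=0$ value costs only a factor $\lambda$, and that the mixed terms $(\dot y-c-\lambda\nu)\langle\partial_x\varepsilon,g_j\rangle$ are genuinely quadratic once the crude bound on $\dot y-c-\lambda\nu$ is in force. The algebraic core, namely the appearance of the combination $-a_0\phi_{\omega,c}+i\partial_x\phi_{\omega,c}$ and of the factor $2(a_0^2-\omega)M(\phi_{\omega,c})$, is then forced by the explicit expressions for $S''_{\omega,c}(\phi_{\omega,c})\phi_{\omega,c}$ and $S''_{\omega,c}(\phi_{\omega,c})(i\partial_x\phi_{\omega,c})$ in Lemma \ref{S''f} together with the identity $P=a_0M$ of Lemma \ref{Apendix}.
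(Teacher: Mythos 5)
Your proposal is correct and follows essentially the same route as the paper: write the evolution equation for $\varepsilon$, pair it against the modulation directions (equivalently, differentiate the orthogonality conditions), and invert the resulting linear system whose leading coefficient matrix is the nondegenerate Jacobian $D\overrightarrow F$ from Proposition \ref{Modulation}. The only cosmetic difference is that the paper tests directly against the pre-assembled direction $-a_0\phi_{\omega,c}+i\partial_x\phi_{\omega,c}$ (using $P=a_0M$ to kill the $\dot\theta$ coefficient at leading order), whereas you recover the same combination a posteriori via Cramer's rule on the $2\times 2$ subsystem; the algebra and error bookkeeping are identical.
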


\begin{proof}
Now, we consider the dynamic of the parameters. From \eqref{modulation-u}, we have
$$u=e^{i\theta(t)}\big(\phi_{\omega+\lambda(t)\mu, c+\lambda(t)\nu}+\varepsilon(t)\big)(x-y(t)).$$
Using \eqref{eqs:gDNLS}, we obtain
\begin{align}\label{dot}
i\dot\varepsilon&-(\dot\theta-\omega-\lambda\mu)(\phi_{\omega+\lambda\mu, c+\lambda\nu}+\varepsilon)
-(\dot y-c-\lambda\nu)(i\partial_x\phi_{\omega+\lambda\mu, c+\lambda\nu}+i\partial_x\varepsilon)\nonumber\\
&+\dot \lambda\cdot i\partial_\lambda\phi_{\omega+\lambda\mu, c+\lambda\nu}
=S''_{\omega, c}(\phi_{\omega, c})\varepsilon+O(\lambda\varepsilon+\varepsilon^2),
\end{align}
where $O(\varepsilon)$ is a functional of $\varepsilon$ with the order equal or more than one.

First, the equation \eqref{dot} producting with $-a_0\phi_{\omega+\lambda\mu, c+\lambda\nu}+i\partial_x\phi_{\omega+\lambda\mu,c+\lambda\nu}$, we get
\begin{align*}
&-(\dot\theta-\omega-\lambda\mu)\big[-2a_0M(\phi_{\omega+\lambda\mu,c+\lambda\nu})+2P(\phi_{\omega+\lambda\mu,c+\lambda\nu})+O(\|\varepsilon\|_{\H1})\big]\\
&-(\dot y-c-\lambda\nu)\big[-2a_0P(\phi_{\omega+\lambda\mu,c+\lambda\nu})+\|\partial_x\phi_{\omega+\lambda\mu,c+\lambda\nu}\|_{L^2}^2+O(\|\varepsilon\|_{\H1})\big]\\
&+\dot\lambda\big[\big\langle i\partial_\lambda\phi_{\omega+\lambda\mu,c+\lambda\nu},  -a_0\phi_{\omega+\lambda\mu,c+\lambda\nu}+i\partial_x\phi_{\omega+\lambda\mu,c+\lambda\nu}\big\rangle
-\langle \varepsilon, ia_0\partial_\lambda\phi_{\omega+\lambda\mu,c+\lambda\nu}+\partial_\lambda\partial_x\phi_{\omega+\lambda\mu,c+\lambda\nu}\rangle\big]\\
&=\big\langle S''_{\omega, c}(\phi_{\omega, c})(-a_0\phi_{\omega+\lambda\mu, c+\lambda\nu}+i\partial_x\phi_{\omega+\lambda\mu,c+\lambda\nu}), \varepsilon\big\rangle
+O(\lambda\|\varepsilon\|_{\H1}+\|\varepsilon\|_{\H1}^2)
\end{align*}
By a direct expansion, we have
\begin{align}\label{T1}
\phi_{\omega+\lambda\mu, c+\lambda\nu}=\phi_{\omega, c}+\lambda\psi+O(\lambda^2),
\end{align}
and
\begin{align*}
\partial_\lambda\phi_{\omega+\lambda\mu,c+\lambda\nu}
=&\psi+O(\lambda).
\end{align*}
Moreover, together with \eqref{M'P'}, we have
\begin{align*}
M(\phi_{\omega+\lambda\mu,c+\lambda\nu})=&M(\phi_{\omega,c})+\lambda\langle M'(\phi_{\omega, c}), \psi\rangle+O(\lambda^2)=M(\phi_{\omega,c})+O(\lambda^2),\\
P(\phi_{\omega+\lambda\mu,c+\lambda\nu})=&P(\phi_{\omega,c})+\lambda\langle P'(\phi_{\omega, c}), \psi\rangle+O(\lambda^2)=P(\phi_{\omega,c})+O(\lambda^2).
\end{align*}
From \eqref{w}, we get
\begin{align*}
\|\partial_x\phi_{\omega+\lambda\mu,c+\lambda\nu}\|_{L^2}^2=&(\omega+\lambda\mu)\|\phi_{\omega+\lambda\mu,c+\lambda\nu}\|_{L^2}^2\\
=&2\omega M(\phi_{\omega,c})+O(\lambda).
\end{align*}
We collect the above computations and obtain
\begin{align*}
&-(\dot\theta-\omega-\lambda\mu)\big[-2a_0M(\phi_{\omega,c})+2P(\phi_{\omega,c})+O(\lambda^2+\|\varepsilon\|_{\H1})\big]\\
&-(\dot y-c-\lambda\nu)\big[-2a_0P(\phi_{\omega,c})+2\omega M(\phi_{\omega,c})+O(\lambda+\|\varepsilon\|_{\H1})\big]\\
&+\dot\lambda\big[\big\langle i\psi,  -a_0\phi_{\omega,c}+i\partial_x\phi_{\omega,c}\big\rangle+O(\lambda+\|\varepsilon\|_{\H1})\big]\\
&=\big\langle S''_{\omega, c}(\phi_{\omega, c})(-a_0\phi_{\omega, c}+i\partial_x\phi_{\omega, c}+O(\lambda)), \varepsilon\big\rangle
+O(\lambda\|\varepsilon\|_{\H1}+\|\varepsilon\|_{\H1}^2).
\end{align*}
By Lemma \ref{Apendix}, we know that
$$
-a_0M(\phi_{\omega,c})+P(\phi_{\omega,c})=0, \quad
-a_0P(\phi_{\omega,c})+\omega M(\phi_{\omega,c})=(\omega-a_0^2)M(\phi_{\omega, c})\neq0.
$$
Then, we get
\begin{align}\label{1}
&-(\dot\theta-\omega-\lambda\mu)\big[O(\lambda^2+\|\varepsilon\|_{\H1})\big]\nonumber\\
&-(\dot y-c-\lambda\nu)\big[2(\omega-a_0^2)M(\phi_{\omega, c})+O(\lambda+\|\varepsilon\|_{\H1})\big]\nonumber\\
&+\dot\lambda\big[\big\langle i\psi,  -a_0\phi_{\omega,c}+i\partial_x\phi_{\omega,c}\big\rangle+O(\lambda+\|\varepsilon\|_{\H1})\big]\nonumber\\
&=\big\langle S''_{\omega, c}(\phi_{\omega, c})(-a_0\phi_{\omega, c}+i\partial_x\phi_{\omega, c}), \varepsilon\big\rangle
+O(\lambda\|\varepsilon\|_{\H1}+\|\varepsilon\|_{\H1}^2).
\end{align}

Next, producting with $\phi_{\omega+\lambda\mu,c+\lambda\nu}$ in the equation \eqref{dot} and treating as above, 
\begin{align}\label{2}
&-(\dot\theta-\omega-\lambda\mu)[2M(\phi_{\omega, c})+O(\lambda^2+\|\varepsilon\|_{\H1})]\nonumber\\
&-(\dot y-c-\lambda\nu)[2P(\phi_{\omega,c})+O(\lambda^2+\|\varepsilon\|_{\H1})]\nonumber\\
&+\dot\lambda[(i\psi, \phi_{\omega,c})+O(\lambda+\|\varepsilon\|_{\H1})]\nonumber\\
&=O(\|\varepsilon\|_{\H1}).
\end{align}

Finally, producting with $iJ'(\phi_{\omega+\lambda\mu,c+\lambda\nu})$  in the equation \eqref{dot} and treating as above, we obtain
\begin{align}\label{3}
&-(\dot\theta-\omega-\lambda\mu)[\langle J'(\phi_{\omega,c}), -i\phi_{\omega,c}\rangle+O(\lambda+\|\varepsilon\|_{\H1})]\nonumber\\
&-(\dot y-c-\lambda\nu)[\langle J'(\phi_{\omega,c}), \partial_x\phi_{\omega,c}\rangle+O(\lambda+\|\varepsilon\|_{\H1})]\nonumber\\
&+\dot\lambda[\langle J'(\phi_{\omega,c}), \psi\rangle+O(\lambda+\|\varepsilon\|_{\H1})\big]\nonumber\\
&=O(\|\varepsilon\|_{\H1}).
\end{align}
Using \eqref{1}, \eqref{2}, \eqref{3} and Lemma \ref{J'psi}, we obtain
\begin{align*}
\dot y-c-\lambda\nu=C_{\omega,c}\dot\lambda+\big\langle S''_{\omega, c}(\phi_{\omega, c})\frac{-a_0\phi_{\omega, c}+i\partial_x\phi_{\omega, c}}{2(a_0^2-\omega)M(\phi_{\omega, c})}, \varepsilon\big\rangle+O(\lambda\|\varepsilon\|_{\H1}+\|\varepsilon\|^2_{\H1}),
\end{align*}
and
$$
\dot\theta-\omega-\lambda\mu
=O(\|\varepsilon\|_{\H1}),\quad
\dot\lambda
=O(\|\varepsilon\|_{\H1}).
$$
Here
$$
C_{\omega,c}=\frac{-1}{2(a_0^2-\omega)M(\phi_{\omega, c})}\langle i\psi, -a_0\phi_{\omega,c}+i\partial_x\phi_{\omega,c}\rangle.
$$
This completes the proof.

\end{proof}

\section{Instability of the Solitary Wave Solutions}

\subsection{Virial Estimates}
To prove the main theorem, first, we need the localized virial idetities.
\begin{lem}\label{V}
Let $\varphi\in C^\infty(\mathbb{R})$, then
\begin{align*}
\frac{d}{dt}\int_{\mathbb{R}}\varphi |u|^2 dx
=&-2\mbox{Im}\int_{\mathbb{R}}\varphi'u\,\overline{\partial_x u} dx+\frac{1}{\sigma+1}\int_{\mathbb{R}}\varphi' |u|^{2\sigma+2} dx,\\
\frac{d}{dt}\int_{\mathbb{R}}\varphi\mbox{Im}(u\,\overline{\partial_xu})
=&-2\int_{\mathbb{R}}\varphi'|\partial_x u|^2 dx+\frac 12\int_{\mathbb{R}}\varphi'''|u|^2 dx+\mbox{Im}\int_{\mathbb{R}}\varphi'|u|^{2\sigma}u\,\overline{\partial_xu}.
\end{align*}
\end{lem}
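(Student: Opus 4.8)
The proof is a direct computation: differentiate each density under the integral sign, substitute the equation, and reorganize the resulting expressions into divergence form so that the weight $\varphi$ can be transferred onto its derivatives by integration by parts. Throughout I write the equation as $\partial_t u = i\partial_x^2 u - |u|^{2\sigma}\partial_x u$, and I assume $u$ is smooth and decaying so that all integrations by parts discard boundary terms.

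For the first identity, I would start from $\partial_t|u|^2 = 2\mbox{Re}(\overline{u}\,\partial_t u)$ and insert the equation. The Schr\"odinger term contributes $2\mbox{Re}(i\overline{u}\,\partial_x^2 u) = -2\,\partial_x\mbox{Im}(\overline{u}\,\partial_x u)$, using $\mbox{Im}(\partial_x\overline{u}\,\partial_x u)=0$, while the derivative nonlinearity contributes $-|u|^{2\sigma}\partial_x(|u|^2) = -\frac{1}{\sigma+1}\partial_x(|u|^{2\sigma+2})$. Multiplying by $\varphi$, integrating, and integrating by parts once moves both total derivatives onto $\varphi'$ and produces exactly the claimed right-hand side, after recalling $\mbox{Im}(\overline{u}\,\partial_x u)=-\mbox{Im}(u\,\overline{\partial_x u})$.

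For the second identity, set $\rho=\mbox{Im}(u\,\overline{\partial_x u})$ and compute $\partial_t\rho=\mbox{Im}(\partial_t u\,\overline{\partial_x u})+\mbox{Im}(u\,\overline{\partial_x\partial_t u})$, substituting the equation in both summands. In the first summand the nonlinear piece is $\mbox{Im}(-|u|^{2\sigma}|\partial_x u|^2)=0$, so the only nonlinear contribution comes from $\mbox{Im}(u\,\overline{\partial_x\partial_t u})$. I would treat the linear and nonlinear parts separately. For the linear part, repeated use of $\mbox{Re}(\partial_x^2 u\,\overline{\partial_x u})=\frac12\partial_x|\partial_x u|^2$ and $\mbox{Re}(u\,\partial_x^2\overline{u})=\frac12\partial_x^2|u|^2-|\partial_x u|^2$ collapses everything into the divergence $2\,\partial_x|\partial_x u|^2-\frac12\partial_x^3|u|^2$; integrating against $\varphi$ and integrating by parts yields $-2\int\varphi'|\partial_x u|^2+\frac12\int\varphi'''|u|^2$. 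For the nonlinear part, expanding $\partial_x(|u|^{2\sigma}\partial_x\overline{u})$ by the product rule and using $\mbox{Im}(u\,\partial_x\overline{u})=\rho$ together with $\mbox{Im}(u\,\partial_x^2\overline{u})=\partial_x\rho$ shows that the entire contribution equals $-\partial_x(|u|^{2\sigma}\rho)$; integrating against $\varphi$ then produces the last term $\mbox{Im}\int\varphi'|u|^{2\sigma}u\,\overline{\partial_x u}$.

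The routine part is the bookkeeping. The one genuinely delicate point is the third-order linear terms in the second identity, where one must correctly recognize the combination $-\mbox{Re}(u\,\partial_x^3\overline{u})$ as a total derivative and verify that all boundary-free rearrangements produce precisely the coefficients $2$ and $-\tfrac12$. I expect this to be the main obstacle; the nonlinear terms, by contrast, telescope cleanly once one observes that the two surviving pieces assemble into the single divergence $\partial_x(|u|^{2\sigma}\rho)$.
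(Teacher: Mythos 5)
Your proof is correct and follows essentially the same route as the paper: substitute the equation into the time derivative of each density, recognize the resulting expressions as total $x$-derivatives, and integrate by parts against $\varphi$. The only cosmetic difference is that the paper first symmetrizes the term $\mathrm{Im}\int\varphi\, u\,\overline{\partial_{tx}u}$ by an integration by parts before inserting the equation, whereas you insert the equation first and then collapse to divergence form; the bookkeeping, including the coefficients $2$ and $\tfrac12$ on the linear terms and the telescoping of the nonlinear terms into $-\partial_x\big(|u|^{2\sigma}\,\mathrm{Im}(u\,\overline{\partial_x u})\big)$, checks out.
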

\begin{proof}
Combining the eqution \eqref{eqs:gDNLS} and integration by parts,  we have
\begin{align*}
\frac{d}{dt}\int_{\mathbb{R}}\varphi |u|^2 dx
=&2\mbox{Re}\int_{\mathbb{R}}\varphi\partial_tu\,\overline u\\
=&2\mbox{Re}\int_{\mathbb{R}}\varphi(i\partial_x^2u-|u|^{2\sigma}\partial_xu)\overline u\\
=&-2\mbox{Re}\int_{\mathbb{R}}i\varphi'\partial_xu\,\overline u dx+\frac{1}{\sigma+1}\mbox{Re}\int_{\mathbb{R}}\varphi|u|^{2\sigma+2}dx\\
=&-2\mbox{Im}\int_{\mathbb{R}}\varphi'u\,\overline{\partial_x u} dx+\frac{1}{\sigma+1}\int_{\mathbb{R}}\varphi' |u|^{2\sigma+2} dx.
\end{align*}
By the same way, we obtain
\begin{align*}
\frac{d}{dt}\int_{\mathbb{R}}\varphi\mbox{Im}(u\,\overline{\partial_xu})
=&\mbox{Im}\int_{\mathbb{R}}\varphi\partial_tu\,\overline {\partial_xu} dx+\mbox{Im}\int_{\mathbb{R}}\varphi u\,\overline{\partial_{t x}u}dx\\
=&2\mbox{Im}\int_{\mathbb{R}}\varphi\partial_tu\,\overline {\partial_xu} dx+\mbox{Im}\int_{\mathbb{R}}\varphi' \partial_tu\,\overline udx\\
=&2\mbox{Im}\int_{\mathbb{R}}\varphi(i\partial_x^2u-|u|^{2\sigma}\partial_xu)\overline {\partial_xu} dx+\mbox{Im}\int_{\mathbb{R}}\varphi' (i\partial_x^2u-|u|^{2\sigma}\partial_xu)\overline udx\\
=&-2\int_{\mathbb{R}}\varphi'|\partial_x u|^2 dx+\frac 12\int_{\mathbb{R}}\varphi'''|u|^2 dx+\mbox{Im}\int_{\mathbb{R}}\varphi'|u|^{2\sigma}u\,\overline{\partial_xu}.
\end{align*}
This proves the lemma.
\end{proof}
Now we define $\varphi_R\in C^\infty(\mathbb{R})$ satisfying
\begin{align*}
\varphi_R(x)=
\left\{ \aligned
&x,\quad |x|\leq R,\\
&2R,\quad |x|\geq 2R,
\endaligned
\right.
\end{align*}
and $0\leq |\varphi_R'|\leq 1$ for any $x\in\R$.
Moreover, we denote
\begin{align*}
I_1(t)=&\int_{\mathbb{R}}\varphi_R(x-y(t))|u|^2 dx,\\
I_2(t)=&\int_{\mathbb{R}}\varphi_R(x-y(t))\mbox{Im}(u\,\overline{\partial_xu}) dx.
\end{align*}
To prove the main theorem, we define the key functional $I(t)$ as
\begin{align*}
I(t)=-\sqrt\omega I_1(t)+I_2(t)+\widetilde C_{\omega,c}\lambda,
\end{align*}
where $\widetilde C_{\omega,c}=2C_{\omega,c}(M(\phi_{\omega,c})+P(\phi_{\omega,c}))$.

Hence, by Lemma \ref{V} we can obtain the following localized virial estimates.
\begin{lem}\label{VE}
Assume that
\begin{align}\label{*}
u=e^{i\theta(t)}\big(\phi_{\omega+\lambda(t)\mu, c+\lambda(t)\nu}+\varepsilon(t)\big)(x-y(t)),
\end{align}
with the parameters obey the estimates in Lemma \ref{doty}.
Then the following estimates hold:
\begin{align*}
I_1'(t)=&-2C_{\omega.c}\dot\lambda M(\phi_{\omega,c})-2c[M(u_0)-M(\phi_{\omega, c})]-4[P(u_0)-P(\phi_{\omega, c})]\\
&-\frac{\sigma-1}{\sigma(2-\sigma)\omega}\big\langle S''_{\omega, c}(\phi_{\omega, c})(\sqrt\omega\phi_{\omega, c}-i\partial_x\phi_{\omega, c}), \varepsilon\big\rangle
+\frac{1}{2(\sigma+1)}\lambda^2\frac{d^2}{d\lambda^2}\|\phi_{\omega+\lambda\mu, c+\lambda\nu}\|_{L^{2\sigma+2}}^{2\sigma+2}\Big|_{\lambda=0}\\
&+O(\lambda\|\varepsilon\|_{\H1}+\|\varepsilon\|_{\H1}^2+\frac 1R)+o(\lambda^2),
\end{align*}
and
\begin{align*}
I_2'(t)=&-2C_{\omega.c}\dot\lambda P(\phi_{\omega,c})-2[P(u_0)-P(\phi_{\omega, c})]-4[E(u_0)-E(\phi_{\omega, c})]\\
&-\frac{\sigma-1}{\sigma(2-\sigma)\sqrt\omega}\big\langle S''_{\omega, c}(\phi_{\omega, c})(\sqrt\omega\phi_{\omega, c}-i\partial_x\phi_{\omega, c}), \varepsilon\big\rangle
+\frac{\sigma-1}{2(\sigma+1)}\lambda^2\frac{d^2}{d\lambda^2}J(\phi_{\omega+\lambda\mu, c+\lambda\nu})\Big|_{\lambda=0}\\
&+O(\lambda\|\varepsilon\|_{\H1}+\|\varepsilon\|_{\H1}^2+\frac 1R)+o(\lambda^2).
\end{align*}
\end{lem}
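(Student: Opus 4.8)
The plan is to differentiate $I_1$ and $I_2$ directly, applying the two identities of Lemma \ref{V} with $\varphi(x)=\varphi_R(x-y(t))$, and then to substitute the modulation decomposition \eqref{*} together with the parameter dynamics of Lemma \ref{doty}. Since the cutoff is centred at the moving point $y(t)$, the chain rule produces, besides the flux terms of Lemma \ref{V}, a transport term $-\dot y\int_{\mathbb R}\varphi_R'(x-y)|u|^2\,dx$ in $I_1'$ and $-\dot y\int_{\mathbb R}\varphi_R'(x-y)\,\mathrm{Im}(u\,\overline{\partial_x u})\,dx$ in $I_2'$. This is the only place the velocity enters, and it is where \eqref{est-derivative} will be used.

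Next I localize. On $|x-y|\le R$ one has $\varphi_R'=1$, while $\varphi_R''$ and $\varphi_R'''$ are supported in $R\le|x-y|\le 2R$ with sizes $O(1/R)$ and $O(1/R^2)$. Recentring by $x\mapsto x+y$ and writing $\phi=\phi_{\omega+\lambda\mu,c+\lambda\nu}$, each integrand splits into a purely-$\phi$ part, whose exponential decay makes the replacement $\varphi_R'\to 1$ (and the discarding of the $\varphi_R'',\varphi_R'''$ terms) cost only exponentially small, hence $O(1/R)$, errors; a part linear in $\varepsilon$, controlled by $\|(1-\varphi_R')\phi\|_{L^2}\|\varepsilon\|_{H^1}\lesssim e^{-cR}\delta=O(1/R)$; and a part quadratic in $\varepsilon$, bounded by $\|\varepsilon\|_{H^1}^2$. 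So up to $O(1/R+\|\varepsilon\|_{H^1}^2)$ I may set $\varphi_R'=1$ and drop the higher-derivative terms. The surviving global integrals are then rewritten through the conservation laws: $\int|u|^2=2M(u_0)$ and $\mathrm{Im}\int u\,\overline{\partial_x u}=2P(u_0)$ for $I_1$, and for $I_2$ the combination $-2\|\partial_x u\|_{L^2}^2+J(u)=-4E(u_0)+\frac{\sigma-1}{\sigma+1}J(u)$ obtained from $E(u)=\frac12\|\partial_x u\|_{L^2}^2-\frac{1}{2(\sigma+1)}J(u)$.

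What remains ($\frac{1}{\sigma+1}\|u\|_{L^{2\sigma+2}}^{2\sigma+2}$ in $I_1'$, $\frac{\sigma-1}{\sigma+1}J(u)$ in $I_2'$, and the transport terms) I Taylor expand in $\varepsilon$ and $\lambda$, using $\|\phi+\varepsilon\|_{L^{2\sigma+2}}^{2\sigma+2}=\|\phi\|_{L^{2\sigma+2}}^{2\sigma+2}+2(\sigma+1)\langle|\phi|^{2\sigma}\phi,\varepsilon\rangle+O(\|\varepsilon\|_{H^1}^2)$ and $J(\phi+\varepsilon)=J(\phi)+\langle J'(\phi),\varepsilon\rangle+O(\|\varepsilon\|_{H^1}^2)$, and inserting \eqref{est-derivative} for $\dot y$. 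The constant ($\lambda=\varepsilon=0$) terms assemble, via \eqref{xphi} and \eqref{JPE}, into the conserved-quantity differences $[M(u_0)-M(\phi_{\omega,c})]$, $[P(u_0)-P(\phi_{\omega,c})]$, $[E(u_0)-E(\phi_{\omega,c})]$, with $\dot y\approx c$ fixing their coefficients, while the $\dot\lambda$-part of \eqref{est-derivative} supplies $-2C_{\omega,c}\dot\lambda\,M(\phi_{\omega,c})$, resp. $-2C_{\omega,c}\dot\lambda\,P(\phi_{\omega,c})$. The first-order-in-$\lambda$ terms cancel: by \eqref{xphi} and \eqref{M'P'} the $O(\lambda)$ term of $\frac{1}{\sigma+1}\|\phi_{\omega+\lambda\mu,c+\lambda\nu}\|_{L^{2\sigma+2}}^{2\sigma+2}$ is $2\lambda\nu M(\phi_{\omega,c})$, offsetting the $\lambda\nu$ piece of the transport term; similarly, by Lemma \ref{J'psi} and the relations $\mu/\nu=\sqrt\omega$, $P(\phi_{\omega,c})=(\sigma-1)\sqrt\omega\,M(\phi_{\omega,c})$ of Lemma \ref{Apendix}, the $O(\lambda)$ term $\frac{\sigma-1}{\sigma+1}\lambda\langle J'(\phi_{\omega,c}),\psi\rangle$ reduces to $2\lambda\nu P(\phi_{\omega,c})$ and cancels the corresponding transport piece. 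Hence only the second-order terms $\frac{1}{2(\sigma+1)}\lambda^2\frac{d^2}{d\lambda^2}\|\phi_{\omega+\lambda\mu,c+\lambda\nu}\|_{L^{2\sigma+2}}^{2\sigma+2}\big|_{\lambda=0}$ and $\frac{\sigma-1}{2(\sigma+1)}\lambda^2\frac{d^2}{d\lambda^2}J(\phi_{\omega+\lambda\mu,c+\lambda\nu})\big|_{\lambda=0}$ survive.

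The main obstacle is the linear-in-$\varepsilon$ bookkeeping. Such terms arise from two sources, the nonlinear functional above and the $\varepsilon$-part of $\dot y$ in \eqref{est-derivative}, and I must show they combine into the single stated expression. The tool is Lemma \ref{S''f}: from $S''_{\omega,c}(\phi_{\omega,c})\phi_{\omega,c}=-2\sigma i|\phi_{\omega,c}|^{2\sigma}\partial_x\phi_{\omega,c}$ and $S''_{\omega,c}(\phi_{\omega,c})(i\partial_x\phi_{\omega,c})=-2\sigma\omega|\phi_{\omega,c}|^{2\sigma}\phi_{\omega,c}$ I rewrite $\langle|\phi|^{2\sigma}\phi,\varepsilon\rangle$ and $\langle J'(\phi),\varepsilon\rangle$ (the latter via \eqref{J'}) as combinations of $\langle S''_{\omega,c}(\phi_{\omega,c})\phi_{\omega,c},\varepsilon\rangle$ and $\langle S''_{\omega,c}(\phi_{\omega,c})(i\partial_x\phi_{\omega,c}),\varepsilon\rangle$; then, using $a_0^2-\omega=-\sigma(2-\sigma)\omega$ and $P(\phi_{\omega,c})=a_0 M(\phi_{\omega,c})$, one checks that the two contributions add up precisely to $-\frac{\sigma-1}{\sigma(2-\sigma)\omega}\langle S''_{\omega,c}(\phi_{\omega,c})(\sqrt\omega\phi_{\omega,c}-i\partial_x\phi_{\omega,c}),\varepsilon\rangle$ for $I_1'$ and to the same expression with $\sqrt\omega$ in place of $\omega$ for $I_2'$. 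Verifying that these coefficients line up exactly, rather than leaving a residual bare $\langle\cdot,\varepsilon\rangle$ of size only $O(\|\varepsilon\|_{H^1})$ (which would be fatal later, since $\varepsilon$ is controlled only in $H^1$), is the delicate computation; all remaining $O(\lambda)$ corrections from expanding $\phi_{\omega+\lambda\mu,c+\lambda\nu}$ inside already-small prefactors are absorbed into $O(\lambda\|\varepsilon\|_{H^1}+\|\varepsilon\|_{H^1}^2)+o(\lambda^2)$.
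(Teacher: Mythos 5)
Your proposal is correct and follows essentially the same route as the paper's proof: the moving-cutoff transport term combined with \eqref{est-derivative}, localization via the exponential decay of $\phi_{\omega+\lambda\mu,c+\lambda\nu}$ to replace $\varphi_R'$ by $1$ up to $O(1/R+\|\varepsilon\|_{H^1}^2)$, the conservation laws, the cancellation of the $O(\lambda)$ terms through \eqref{xphi}, \eqref{M'P'} and Lemma \ref{J'psi}, and the regrouping of the linear-in-$\varepsilon$ terms into the single $S''_{\omega,c}(\phi_{\omega,c})(\sqrt\omega\phi_{\omega,c}-i\partial_x\phi_{\omega,c})$ pairing via Lemma \ref{S''f}, \eqref{J'} and $a_0^2-\omega=-\sigma(2-\sigma)\omega$. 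All the claimed cancellations and coefficient identities check out against the paper's computation.
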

\begin{proof}
By Lemma \ref{V} and some direct computation, we have the following formulas.
\begin{align*}
I_1'(t)=&-\dot y\int_{\mathbb{R}}\varphi_R'(x-y(t))|u|^2-2\mbox{Im}\int_{\mathbb{R}}\varphi_R'(x-y(t))u\,\overline{\partial_x u} dx
+\frac{1}{\sigma+1}\int_{\mathbb{R}}\varphi_R'(x-y(t))|u|^{2\sigma+2} dx,\\
I_2'(t)=&-\dot y\int_{\mathbb{R}}\varphi_R'(x-y(t))\mbox{Im}(u\,\overline{\partial_xu})dx
-\int_{\mathbb{R}}\varphi_R'(x-y(t))[2|\partial_xu|^2-\mbox{Im}(|u|^{2\sigma}u\,\overline{\partial_xu})]dx\\
&+\frac 12\int_{\mathbb{R}}\varphi_R'''(x-y(t))|u|^2 dx.
\end{align*}
First, with the definitions of $M$ and $P$ yields
\begin{align}\label{I'1}
I_1'(t)
=&-2\dot y M(u)-2\dot y\int_{\mathbb{R}}\big[\varphi_R'(x-y(t))-1\big]|u|^2 dx
-4P(u)-2\mbox{Im}\int_{\mathbb{R}}\big[\varphi_R'(x-y(t))-1\big]u\,\overline{\partial_x u}\, dx\nonumber\\
&+\frac{1}{\sigma+1}\|u\|_{L^{2\sigma+2}}^{2\sigma+2}+\frac{1}{\sigma+1}\int_{\mathbb{R}}\big[\varphi_R'(x-y(t))-1\big]|u|^{2\sigma+2} dx\nonumber\\
=&-2(\dot y-c-\lambda\nu)M(u)-2(c+\lambda\nu)M(u)-4P(u)+\frac{1}{\sigma+1}\|u\|_{L^{2\sigma+2}}^{2\sigma+2}\nonumber\\
&+O\Big(\int_{\mathbb{R}}\big[\varphi_R'(x-y(t))-1\big]\big( \dot y|u|^2+\mbox{Im}(u\,\overline{\partial_x u})+|u|^{2\sigma+2} \big) dx \Big).
\end{align}
In fact, supp[$\varphi_R'(x-y(t))-1$] $\subset$ $\{x:|x-y(t)\geq R\}$, $0\leq |\varphi_R'|\leq 1$. From Lemma \ref{doty}, we know that $|\dot y|\lesssim1$. Then, after using \eqref{*} and the exponential decaying of $\phi_{\omega+\lambda\mu, c+\lambda\nu}$, we have
\begin{align}\label{O}
\int_{\mathbb{R}}\big[\varphi_R'(x-y(t))&-1\big]\big( \dot y|u|^2+\mbox{Im}(u\,\overline{\partial_x u})+|u|^{2\sigma+2} \big) dx\nonumber\\
&\lesssim\int_{|x-y(t)|\geq R}\Big[|\phi_{\omega+\lambda\mu, c+\lambda\nu}|^2+|\partial_x^2\phi_{\omega+\lambda\mu, c+\lambda\nu}|+|\varepsilon|^2+|\partial_x\varepsilon|^2+|\varepsilon|^{2\sigma+2}\Big]\nonumber\\
&=O\big(\|\varepsilon\|_{\H1}^2+\frac 1R\big).
\end{align}
Mergering \eqref{est-derivative} and \eqref{O} into \eqref{I'1}, we obtain
\begin{align*}
I_1'(t)=&-2\big[C_{\omega,c}\dot\lambda+\big\langle S''_{\omega, c}(\phi_{\omega, c})\frac{-a_0\phi_{\omega, c}+i\partial_x\phi_{\omega, c}}{2(a_0^2-\omega)M(\phi_{\omega, c})}, \varepsilon\big\rangle+O(\lambda\|\varepsilon\|_{\H1}+\|\varepsilon\|^2_{\H1})\big]M(u)\\
&-2(c+\lambda\nu)M(u)-4P(u)+\frac{1}{\sigma+1}\|u\|_{L^{2\sigma+2}}^{2\sigma+2}+O\big(\|\varepsilon\|_{\H1}^2+\frac 1R\big).
\end{align*}
Now, using \eqref{M'P'}, \eqref{T1} and \eqref{*}, we get
\begin{align}\label{ME}
M(u)
=&M(\phi_{\omega, c})+\langle M'(\phi_{\omega, c}), \varepsilon\rangle+O(\lambda\|\varepsilon\|_{\H1}+\|\varepsilon\|_{\H1}^2),	
\end{align}
and
\begin{align}\label{PE}
P(u)
=&P(\phi_{\omega, c})+\langle P'(\phi_{\omega, c}), \varepsilon\rangle+O(\lambda\|\varepsilon\|_{\H1}+\|\varepsilon\|_{\H1}^2).
\end{align}
Moreover,
\begin{align}\label{CE}
\|u\|_{L^{2\sigma+2}}^{2\sigma+2}
&=\|\phi_{\omega, c}\|_{L^{2\sigma+2}}^{2\sigma+2}+2(\sigma+1)\lambda\langle|\phi_{\omega, c}|^{2\sigma}\phi_{\omega, c}, \psi\rangle
+2(\sigma+1)\langle|\phi_{\omega, c}|^{2\sigma}\phi_{\omega, c}, \varepsilon\rangle \nonumber\\
&+\frac 12\lambda^2\frac{d^2}{d\lambda^2}\|\phi_{\omega+\lambda\mu, c+\lambda\nu}\|_{L^{2\sigma+2}}^{2\sigma+2}\Big|_{\lambda=0}
+O\big(\lambda\|\varepsilon\|_{\H1}+\|\varepsilon\|_{\H1}^2\big)+o(\lambda^2),
\end{align}
and
\begin{align}\label{JE}
J(u)
=& J(\phi_{\omega, c})+\lambda\langle J'(\phi_{\omega, c}), \psi\rangle
+\frac 12 \lambda^2\frac{d^2}{d\lambda^2}J(\phi_{\omega+\lambda\mu, c+\lambda\nu})\big|_{\lambda=0}\nonumber\\
&+\frac{\sigma-1}{\sigma+1}\langle J'(\phi_{\omega, c}), \epsilon\rangle
+O\big(\lambda\|\varepsilon\|_{\H1}+\|\varepsilon\|_{\H1}^2\big)+o(\lambda^2).
\end{align}
Combinig \eqref{ME} and \eqref{CE} yields
\begin{align*}
I_1'(t)
=&-2C_{\omega,c}\dot\lambda M(\phi_{\omega, c})-2cM(u)-2\lambda\nu M(\phi_{\omega,c})-4P(u)+\frac{1}{\sigma+1}\|\phi_{\omega, c}\|_{L^{2\sigma+2}}^{2\sigma+2}\\
&+\big\langle -S''_{\omega, c}(\phi_{\omega, c})\frac{-a_0\phi_{\omega, c}+i\partial_x\phi_{\omega, c}}{a_0^2-\omega}+2|\phi_{\omega, c}|^{2\sigma}\phi_{\omega, c}, \varepsilon\big\rangle
+2\lambda\langle|\phi_{\omega, c}|^{2\sigma}\phi_{\omega, c}, \psi\rangle\\
&+\frac{1}{2(\sigma+1)}\lambda^2\frac{d^2}{d\lambda^2}\|\phi_{\omega+\lambda\mu, c+\lambda\nu}\|_{L^{2\sigma+2}}^{2\sigma+2}\Big|_{\lambda=0}
+O\big(\lambda\|\varepsilon\|_{\H1}+\|\varepsilon\|_{\H1}^2+\frac 1R\big)+o(\lambda^2).
\end{align*}
From the conservation laws and \eqref{xphi}, we get
\begin{align*}
I_1'(t)=&-2C_{\omega,c}\dot\lambda M(\phi_{\omega, c})-2cM(u_0)-4P(u_0)-2\lambda\nu M(\phi_{\omega, c})+2cM(\phi_{\omega,c})+4P(\phi_{\omega,c})\\
&+\big\langle -S''_{\omega, c}(\phi_{\omega, c})\frac{-a_0\phi_{\omega, c}+i\partial_x\phi_{\omega, c}}{a_0^2-\omega}+2|\phi_{\omega, c}|^{2\sigma}\phi_{\omega, c}, \varepsilon\big\rangle
+2\lambda\langle|\phi_{\omega, c}|^{2\sigma}\phi_{\omega, c}, \psi\rangle\\
&+\frac{1}{2(\sigma+1)}\lambda^2\frac{d^2}{d\lambda^2}\|\phi_{\omega+\lambda\mu, c+\lambda\nu}\|_{L^{2\sigma+2}}^{2\sigma+2}\Big|_{\lambda=0}
+O\big(\lambda\|\varepsilon\|_{\H1}+\|\varepsilon\|_{\H1}^2+\frac 1R\big)+o(\lambda^2).
\end{align*}
Observe  that
\begin{align*}
\langle|\phi_{\omega, c}|^{2\sigma}\phi_{\omega, c}, \psi\rangle
=\frac{1}{2(\sigma+1)}\partial_\lambda\Big(\|\phi_{\omega+\lambda\mu, c+\lambda\nu}\|_{L^{2\sigma+2}}^{2\sigma+2}\Big)\Big|_{\lambda=0}.
\end{align*}
Moreover, using the equality \eqref{xphi} again yields
\begin{align*}
\langle|\phi_{\omega, c}|^{2\sigma}\phi_{\omega, c}, \psi\rangle
=&\frac{1}{2(\sigma+1)}\partial_\lambda\Big(4(\sigma+1)\big(\frac {c+\lambda\nu}2 M(\phi_{\omega+\lambda\mu, c+\lambda\nu})+P(\phi_{\omega+\lambda\mu, c+\lambda\nu})\big)\Big)\Big|_{\lambda=0}.
\end{align*}
Using Lemma \ref{critical} again, we obtain that
\begin{align*}
\langle|\phi_{\omega, c}|^{2\sigma}\phi_{\omega, c}, \psi\rangle
=\nu M(\phi_{\omega,c}).
\end{align*}
By Lemma \ref{S''f}, we have
\begin{align*}
-S''_{\omega, c}(\phi_{\omega, c})\frac{-a_0\phi_{\omega, c}+i\partial_x\phi_{\omega, c}}{a_0^2-\omega}
+2|\phi_{\omega, c}|^{2\sigma}\phi_{\omega, c}
=-\frac{\sigma-1}{\sigma(2-\sigma)\omega}S''_{\omega, c}(\phi_{\omega, c})\big(\sqrt\omega\phi_{\omega, c}-i\partial_x\phi_{\omega, c}\big).
\end{align*}
Finally, we collect the above equalities and obtain
\begin{align*}
I_1'(t)=&-2C_{\omega,c}\dot\lambda M(\phi_{\omega, c})-2c[M(u_0)-M(\phi_{\omega, c})]-4[P(u_0)-P(\phi_{\omega, c})]\\
&-\frac{\sigma-1}{\sigma(2-\sigma)\omega}\big\langle S''_{\omega, c}(\phi_{\omega, c})\big(\sqrt\omega\phi_{\omega, c}-i\partial_x\phi_{\omega, c}\big), \varepsilon\big\rangle
+\frac{1}{2(\sigma+1)}\lambda^2\frac{d^2}{d\lambda^2}\|\phi_{\omega+\lambda\mu, c+\lambda\nu}\|_{L^{2\sigma+2}}^{2\sigma+2}\Big|_{\lambda=0}\\
&+O\big(\lambda\|\varepsilon\|_{\H1}+\|\varepsilon\|_{\H1}^2+\frac 1R\big)+o(\lambda^2).
\end{align*}
Similarly, from the definitions of $P$, $E$ and $J$, we have
\begin{align*}
I_2'(t)
=&-2\dot yP(u)-4E(u)+\frac{\sigma-1}{\sigma+1}J(u)\\
&-\dot y\int_{\mathbb{R}}[\varphi_R'(x-y(t))-1][\mbox{Im}(u\,\overline{\partial_xu})-2|\partial_xu|^2+\mbox{Im}(|u|^{2\sigma}u\,\overline{\partial_xu})]dx\\
=&-2(\dot y-c-\lambda\nu)P(u)-2(c+\lambda\nu)P(u)-4E(u)+\frac{\sigma-1}{\sigma+1}J(u)+O\big(\|\varepsilon\|_{\H1}^2+\frac 1R\big).
\end{align*}
Using \eqref{est-derivative}, we get
\begin{align*}
I_2'(t)=&-2\big[C_{\omega,c}\dot\lambda+\big\langle S''_{\omega, c}(\phi_{\omega, c})\frac{-a_0\phi_{\omega, c}+i\partial_x\phi_{\omega, c}}{2(a_0^2-\omega)M(\phi_{\omega, c})}, \varepsilon\big\rangle+O(\lambda\|\varepsilon\|_{\H1}+\|\varepsilon\|^2_{\H1})\big]P(u)\\
&-2(c+\lambda\nu)P(u)-4E(u)+\frac{\sigma-1}{\sigma+1}J(u)+O\big(\|\varepsilon\|_{\H1}^2+\frac 1R\big).
\end{align*}
Combining \eqref{PE} and \eqref{JE}, we obtain
\begin{align*}
I_2'(t)
=&-2C_{\omega.c}\dot\lambda P(\phi_{\omega,c})-2cP(u)-2\lambda\nu P(\phi_{\omega,c})-4E(u)+\frac{\sigma-1}{\sigma+1} J(\phi_{\omega, c})\\
&-\frac{P(\phi_{\omega, c})}{M(\phi_{\omega, c})}\big\langle S''_{\omega, c}(\phi_{\omega, c})\frac{-a_0\phi_{\omega, c}+i\partial_x\phi_{\omega, c}}{a_0^2-\omega}, \varepsilon\big\rangle+\frac{\sigma-1}{\sigma+1}\langle J'(\phi_{\omega, c}), \varepsilon\rangle\\
&+\frac{\sigma-1}{\sigma+1}\lambda \langle J'(\phi_{\omega, c}), \psi\rangle
+\frac{\sigma-1}{2(\sigma+1)}\lambda^2\frac{d^2}{d\lambda^2}J(\phi_{\omega+\lambda\mu, c+\lambda\nu})\big|_{\lambda=0}\\
&+O\big(\lambda\|\varepsilon\|_{\H1}+\|\varepsilon\|_{\H1}^2+\frac 1R\big)+o(\lambda^2).
\end{align*}
Using
\eqref{J}, \eqref{JPE}, \eqref{J'}, Lemma \ref{Apendix} and the conservation laws, we have
\begin{align*}
I_2'(t)
=&-2C_{\omega.c}\dot\lambda P(\phi_{\omega,c})-2c[P(u_0)-P(\phi_{\omega, c})]-4[E(u_0)-E(\phi_{\omega, c})]\\
&-\frac{\sigma-1}{\sigma(2-\sigma)\sqrt\omega}\langle S''_{\omega, c}(\phi_{\omega, c})(\sqrt\omega\phi_{\omega, c}-i\partial_x\phi_{\omega, c}), \varepsilon\rangle
+\frac{\sigma-1}{2(\sigma+1)}\lambda^2\frac{d^2}{d\lambda^2}J(\phi_{\omega+\lambda\mu, c+\lambda\nu})\Big|_{\lambda=0}\\
&+O(\lambda\|\varepsilon\|_{\H1}+\|\varepsilon\|_{\H1}^2+\frac 1R)+o(\lambda^2).
\end{align*}
This completes the proof of the lemma.
\end{proof}
According to Lemma \ref{VE}, we have following result.
\begin{lem}\label{lem:I'}
Under the assumptions of Lemma \ref{VE}, we have
\begin{align*}
I'(t)=A(u_0)+B(\lambda)+O\big(\lambda\|\varepsilon\|_{\H1}+\|\varepsilon\|_{\H1}^2+\frac 1R\big)+o(\lambda^2),
\end{align*}
with $A(u_0), B(\lambda)$ verifying
\begin{align*}
A(u_0)=&(2c\sqrt\omega+4\omega)\big[(M(u_0)-M(\phi_{\omega, c}))\big]+(4\sqrt\omega-2c)\big[P(u_0)-P(\phi_{\omega, c})\big]\\
&-4\big[S_{\omega,c}(u_0)-S_{\omega,c}(\phi_{\omega, c})\big],
\end{align*}
and
\begin{align*}
B(\lambda)=b_1\lambda^2,
\end{align*}
for some $b_1>0.$
\end{lem}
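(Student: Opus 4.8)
The plan is to differentiate $I(t)=-\sqrt\omega I_1(t)+I_2(t)+\widetilde C_{\omega,c}\lambda$ and feed in the two expressions for $I_1'(t)$ and $I_2'(t)$ supplied by Lemma \ref{VE}, so that $I'(t)=-\sqrt\omega I_1'(t)+I_2'(t)+\widetilde C_{\omega,c}\dot\lambda$, and then watch three groups of terms collapse in turn: the terms linear in $\varepsilon$, the terms carrying $\dot\lambda$, and the $\lambda^2$-terms. The first point is that both $I_1'(t)$ and $I_2'(t)$ contain the same linear functional $\langle S''_{\omega,c}(\phi_{\omega,c})(\sqrt\omega\phi_{\omega,c}-i\partial_x\phi_{\omega,c}),\varepsilon\rangle$, weighted by $\frac{\sigma-1}{\sigma(2-\sigma)\omega}$ and $\frac{\sigma-1}{\sigma(2-\sigma)\sqrt\omega}$ respectively. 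Since these weights differ by the factor $\sqrt\omega$, the combination $-\sqrt\omega I_1'+I_2'$ annihilates this term exactly; indeed this is the reason for attaching the weight $-\sqrt\omega$ to $I_1$. Removing this contribution is essential, for otherwise $I'(t)$ would retain an uncontrolled $O(\|\varepsilon\|_{\H1})$ piece.

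Next I would isolate the $\dot\lambda$-terms. By Lemma \ref{VE} these are $-2C_{\omega,c}\dot\lambda M(\phi_{\omega,c})$ from $I_1'$ and $-2C_{\omega,c}\dot\lambda P(\phi_{\omega,c})$ from $I_2'$, while differentiating the last summand of $I(t)$ yields $\widetilde C_{\omega,c}\dot\lambda$. Because $\dot\lambda=O(\|\varepsilon\|_{\H1})$ by Lemma \ref{doty}, such terms cannot be placed in the error and must cancel identically; the constant $\widetilde C_{\omega,c}$ is chosen for precisely this cancellation, using $P(\phi_{\omega,c})=a_0 M(\phi_{\omega,c})$ from Lemma \ref{Apendix}. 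What then remains among the conservation-law terms is a fixed linear combination of $M(u_0)-M(\phi_{\omega,c})$, $P(u_0)-P(\phi_{\omega,c})$ and $E(u_0)-E(\phi_{\omega,c})$; substituting $E=S_{\omega,c}-\omega M-cP$ repackages this combination as $A(u_0)$.

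The substantive part is the $\lambda^2$-coefficient. Collecting the two quadratic terms gives $B(\lambda)=b_1\lambda^2$ with
\begin{align*}
b_1=-\frac{\sqrt\omega}{2(\sigma+1)}\frac{d^2}{d\lambda^2}\big\|\phi_{\omega+\lambda\mu,c+\lambda\nu}\big\|_{L^{2\sigma+2}}^{2\sigma+2}\Big|_{\lambda=0}+\frac{\sigma-1}{2(\sigma+1)}\frac{d^2}{d\lambda^2}J(\phi_{\omega+\lambda\mu,c+\lambda\nu})\Big|_{\lambda=0}.
\end{align*}
Writing $m(\lambda)=M(\phi_{\omega+\lambda\mu,c+\lambda\nu})$ and $p(\lambda)=P(\phi_{\omega+\lambda\mu,c+\lambda\nu})$, I would use \eqref{xphi} and \eqref{J} to rewrite both second derivatives through $m'(0),p'(0),m''(0),p''(0)$. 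The first derivatives vanish, since $m'(0)=\langle M'(\phi_{\omega,c}),\psi\rangle=0$ and $p'(0)=\langle P'(\phi_{\omega,c}),\psi\rangle=0$ by \eqref{M'P'}, whereas \eqref{uvM} and \eqref{uvP} give $m''(0)=\kappa_0 z_0$ and $p''(0)=-\kappa_0\sqrt\omega$. Using the critical relation $c=2z_0\sqrt\omega$, the $J$-contribution collapses to
\begin{align*}
\frac{d^2}{d\lambda^2}J\Big|_{\lambda=0}=4\omega\,m''(0)+2c\,p''(0)=4\omega\kappa_0 z_0-4z_0\omega\kappa_0=0,
\end{align*}
so only the $L^{2\sigma+2}$-term survives, and a parallel computation evaluates it to $4(\sigma+1)\kappa_0\sqrt\omega(z_0^2-1)$. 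Hence $b_1=2\omega\kappa_0(1-z_0^2)$, which is strictly positive because $\omega>0$, $\kappa_0>0$ and $z_0\in(-1,1)$.

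All remaining terms are already of the form $O(\lambda\|\varepsilon\|_{\H1}+\|\varepsilon\|_{\H1}^2+\frac1R)+o(\lambda^2)$ recorded in Lemma \ref{VE}, so assembling the three collapses above yields the stated identity. I expect the positivity $b_1>0$ to be the delicate point: it rests on the exact vanishing of the $J$-term at the critical frequency and on the sign data $m''(0)=\kappa_0 z_0$, $p''(0)=-\kappa_0\sqrt\omega$, which encode the degeneracy of the Hessian $d''(\omega,c)$ along its null direction $(\mu,\nu)$. By comparison the production of $A(u_0)$ is routine, amounting to bookkeeping of the conservation-law coefficients through $S_{\omega,c}=E+\omega M+cP$.
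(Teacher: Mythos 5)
Your proposal follows the paper's proof essentially verbatim: the weight $-\sqrt\omega$ on $I_1$ cancels the linear terms $\langle S''_{\omega,c}(\phi_{\omega,c})(\sqrt\omega\phi_{\omega,c}-i\partial_x\phi_{\omega,c}),\varepsilon\rangle$, the summand $\widetilde C_{\omega,c}\lambda$ absorbs the $\dot\lambda$ contributions, the conservation-law terms repackage into $A(u_0)$ via $E=S_{\omega,c}-\omega M-cP$, and $B(\lambda)$ is evaluated through \eqref{xphi}, \eqref{J}, \eqref{M'P'} and \eqref{uvM}--\eqref{uvP}, arriving at the same constant $b_1=2\kappa_0\omega(1-z_0^2)>0$. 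Your remark that $\tfrac{d^2}{d\lambda^2}J(\phi_{\omega+\lambda\mu,c+\lambda\nu})\big|_{\lambda=0}=4\omega\kappa_0z_0-4z_0\omega\kappa_0=0$ at $c=2z_0\sqrt\omega$ is a marginally cleaner way to extract the coefficient than the paper's combined computation, but it is the same argument.
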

\begin{remark}
 The form of $I(t)$ removes the linear term of $\varepsilon$ in $I'(t)$.
\end{remark}
\begin{proof}[Proof of Lemma \ref{lem:I'}]
From the definiton of $I(t)$, we have
\begin{align*}
I'(t)=-\sqrt\omega I'_1(t)+I'_2(t)+\widetilde C_{\omega,c}\dot\lambda.
\end{align*}
By Lemma \ref{VE}, we obtain
\begin{align*}
I'(t)=&\sqrt\omega\big[2c(M(u_0)-M(\phi_{\omega, c}))+4(P(u_0)-P(\phi_{\omega, c}))\big]\\
&-2c\big[P(u_0)-P(\phi_{\omega, c})\big]-4\big[E(u_0)-E(\phi_{\omega, c})\big]\\
&+\frac{1}{2(\sigma+1)}\frac{d^2}{d\lambda^2}\Big[-\sqrt\omega\|\phi_{\omega+\lambda\mu, c+\lambda\nu}\|_{L^{2\sigma+2}}^{2\sigma+2}+(\sigma-1)J(\phi_{\omega+\lambda\mu, c+\lambda\nu})\Big]\Big|_{\lambda=0}\\
&+O\big(\lambda\|\varepsilon\|_{\H1}+\|\varepsilon\|_{\H1}^2+\frac 1R\big)+o(\lambda^2).
\end{align*}
We denote
\begin{align*}
A(u_0)=
&\sqrt\omega\big[2c(M(u_0)-M(\phi_{\omega, c}))+4(P(u_0)-P(\phi_{\omega, c}))\big]\\
&-2c\big[P(u_0)-P(\phi_{\omega, c})\big]-4\big[E(u_0)-E(\phi_{\omega, c})\big],
\end{align*}
and
\begin{align*}
B(\lambda)=&\frac{1}{2(\sigma+1)}\frac{d^2}{d\lambda^2}\Big[-\sqrt\omega\|\phi_{\omega+\lambda\mu, c+\lambda\nu}\|_{L^{2\sigma+2}}^{2\sigma+2}+(\sigma-1)J(\phi_{\omega+\lambda\mu, c+\lambda\nu})\Big]\Big|_{\lambda=0}.
\end{align*}
Then, we have
$$
I'(t)=A(u_0)+B(\lambda)+O\big(\lambda\|\varepsilon\|_{\H1}+\|\varepsilon\|_{\H1}^2+\frac 1R\big)+o(\lambda^2).
$$
By the definiton of $S_{\omega,c}$, we have
\begin{align*}
A(u_0)
=&(2c\sqrt\omega+4\omega)\big[(M(u_0)-M(\phi_{\omega, c}))\big]+(4\sqrt\omega-2c)\big[P(u_0)-P(\phi_{\omega, c})\big]\\
&-4\big[S_{\omega,c}(u_0)-S_{\omega,c}(\phi_{\omega, c})\big].
\end{align*}
Now we consider $B(\lambda)$. Observe, from \eqref{J} and \eqref{xphi}, that
\begin{align*}
-\sqrt\omega&\|\phi_{\omega+\lambda\mu, c+\lambda\nu}\|_{L^{2\sigma+2}}^{2\sigma+2}+(\sigma-1)J(\phi_{\omega+\lambda\mu, c+\lambda\nu})\nonumber\\
=&-\sqrt\omega\cdot4(\sigma+1)[\frac{c+\lambda\nu}{2}M(\phi_{\omega+\lambda\mu, c+\lambda\nu})+P(\phi_{\omega+\lambda\mu, c+\lambda\nu})]\nonumber\\
&+(\sigma-1)[4(\omega+\lambda\mu)M(\phi_{\omega+\lambda\mu, c+\lambda\nu})+2(c+\lambda\nu)P(\phi_{\omega+\lambda\mu, c+\lambda\nu})]\nonumber\\
=&[4\omega(\sigma-1)-2c\sqrt\omega(\sigma+1)]M(\phi_{\omega+\lambda\mu, c+\lambda\nu})
+[2c(\sigma-1)-4\sqrt\omega(\sigma+1)]P(\phi_{\omega+\lambda\mu, c+\lambda\nu})\nonumber\\
&+[4\mu(\sigma-1)-2\nu\sqrt\omega(\sigma+1)]\lambda M(\phi_{\omega+\lambda\mu, c+\lambda\nu})
+2\lambda\nu P(\phi_{\omega+\lambda\mu, c+\lambda\nu}).
\end{align*}
Next, using \eqref{uvM} and \eqref{uvP}, we calculate the terms above separately:
\begin{align*}
\frac{d^2}{d\lambda^2} M(\phi_{\omega+\lambda\mu, c+\lambda\nu})\Big|_{\lambda=0}
=&\mu^2\partial_{\omega \omega}M(\phi_{\omega, c})+2\mu\nu\partial_{\omega c}M(\phi_{\omega, c})+\nu^2\partial_{c c}M(\phi_{\omega, c})
=\kappa_0z_0,\\
\frac{d^2}{d\lambda^2} P(\phi_{\omega+\lambda\mu, c+\lambda\nu})\Big|_{\lambda=0}
=&\mu^2\partial_{\omega \omega}P(\phi_{\omega, c})+2\mu\nu\partial_{\omega c}P(\phi_{\omega, c})+\nu^2\partial_{c c}P(\phi_{\omega, c})
=-\kappa_0\sqrt{\omega}.
\end{align*}
Finally, together with \eqref{M'P'}, and the three estimates above, we get
\begin{align*}
\frac{1}{2(\sigma+1)}&\frac{d^2}{d\lambda^2}\Big[-\sqrt\omega\|\phi_{\omega+\lambda\mu, c+\lambda\nu}\|_{L^{2\sigma+2}}^{2\sigma+2}+(\sigma-1)J(\phi_{\omega+\lambda\mu, c+\lambda\nu})\Big]\Big|_{\lambda=0}\\
=&\frac{1}{\sigma+1}[2\omega(\sigma-1)-\sqrt\omega c(\sigma+1)]\frac{d^2}{d\lambda^2}M(\phi_{\omega+\lambda\mu, c+\lambda\nu})\Big|_{\lambda=0}\\
&+\frac{1}{\sigma+1}[c(\sigma-1)-2\sqrt\omega(\sigma+1)]\frac{d^2}{d\lambda^2} P(\phi_{\omega+\lambda\mu, c+\lambda\nu})\Big|_{\lambda=0}\\
=&\frac{1}{\sigma+1}[2\omega(\sigma-1)-\sqrt\omega c(\sigma+1)]\cdot \kappa_0z_0\\
&+\frac{1}{\sigma+1}[c(\sigma-1)-2\sqrt\omega(\sigma+1)]\cdot-\kappa_0\sqrt{\omega}\\
=&2\kappa_0\omega(1-z_0^2).
\end{align*}
Let $b_1= 2\kappa_0\omega(1-z_0^2)$, then $b_1>0$.
Hence, we obtain that
\begin{align*}
B(\lambda)=b_1\lambda^2.
\end{align*}
This concludes the proof of Lemma \ref{lem:I'}.
\end{proof}

%

\subsection{Proof of Theorem \ref{thm:mainthm}}
Now we give the proof of Theorem \ref{thm:mainthm}. Suppose that $e^{i\omega t}\phi_{\omega,c}(x-ct)$ of \eqref{eqs:gDNLS} is stable. Choose
$$u_0=\phi_{\omega, c}+\delta_1(-a_0\phi_{\omega, c}+i\partial_x\phi_{\omega, c}),\quad \delta_1>0.$$
Here $\delta_1$ is small enough such that $u_0\in U_\delta(\phi_{\omega,c})$ which is given by Proposition \ref{Modulation}.

Let $u$ be the corresponding solution of \eqref{eqs:gDNLS} with the initial data $u_0$. Then, we can write
$$u=e^{i\theta}\big(\phi_{\omega+\lambda\mu, c+\lambda\nu}+\varepsilon\big)(x-y),$$
with $(\theta, y, \lambda)$ obtained in Proposition \ref{Modulation}, and $|\lambda|\ll 1$.
\begin{lem}\label{A(u0)}
There exists $b_2>0$, such that
\begin{align*}
A(u_0)\geq b_2\delta_1.
\end{align*}
\end{lem}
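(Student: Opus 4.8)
The plan is to treat $A(u_0)$ as a function of the small parameter $\delta_1$ and extract its leading order term by Taylor expanding the three conserved functionals around $\phi_{\omega,c}$ in the direction $v:=-a_0\phi_{\omega,c}+i\partial_x\phi_{\omega,c}$, noting that by construction $u_0=\phi_{\omega,c}+\delta_1 v$ with $v$ a fixed, smooth, exponentially decaying function. First I would record
$$M(u_0)-M(\phi_{\omega,c})=\delta_1\langle M'(\phi_{\omega,c}),v\rangle+O(\delta_1^2),\qquad P(u_0)-P(\phi_{\omega,c})=\delta_1\langle P'(\phi_{\omega,c}),v\rangle+O(\delta_1^2),$$
and crucially use $S'_{\omega,c}(\phi_{\omega,c})=0$ from \eqref{S'wc} to see that the first-order term in the expansion of $S_{\omega,c}$ vanishes, so $S_{\omega,c}(u_0)-S_{\omega,c}(\phi_{\omega,c})=O(\delta_1^2)$. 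Hence the $S_{\omega,c}$-contribution to $A(u_0)$ is purely quadratic, and only the $M$- and $P$-terms survive at order $\delta_1$:
$$A(u_0)=\delta_1\Big[(2c\sqrt\omega+4\omega)\langle M'(\phi_{\omega,c}),v\rangle+(4\sqrt\omega-2c)\langle P'(\phi_{\omega,c}),v\rangle\Big]+O(\delta_1^2).$$

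Next I would compute the two inner products using $M'(\phi_{\omega,c})=\phi_{\omega,c}$, $P'(\phi_{\omega,c})=i\partial_x\phi_{\omega,c}$, the identity $\|\partial_x\phi_{\omega,c}\|_{L^2}^2=\omega\|\phi_{\omega,c}\|_{L^2}^2=2\omega M(\phi_{\omega,c})$ from \eqref{w}, and $\langle\phi_{\omega,c},i\partial_x\phi_{\omega,c}\rangle=2P(\phi_{\omega,c})$. This yields
$$\langle M'(\phi_{\omega,c}),v\rangle=-2a_0M(\phi_{\omega,c})+2P(\phi_{\omega,c}),\qquad \langle P'(\phi_{\omega,c}),v\rangle=-2a_0P(\phi_{\omega,c})+2\omega M(\phi_{\omega,c}).$$
Invoking Lemma \ref{Apendix}, which gives $P(\phi_{\omega,c})=a_0M(\phi_{\omega,c})$ with $a_0=(\sigma-1)\sqrt\omega$, the first inner product vanishes identically, while the second reduces to $2(\omega-a_0^2)M(\phi_{\omega,c})$.

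Finally I would check the sign. Since $1<\sigma<2$ we have $\omega-a_0^2=\sigma(2-\sigma)\omega>0$, so $\langle P'(\phi_{\omega,c}),v\rangle>0$; and since $c=2z_0\sqrt\omega$ with $z_0<1$, the prefactor $4\sqrt\omega-2c=4\sqrt\omega(1-z_0)>0$. Thus the bracketed coefficient collapses to $(4\sqrt\omega-2c)\cdot2(\omega-a_0^2)M(\phi_{\omega,c})=:2b_2>0$, giving $A(u_0)=2b_2\delta_1+O(\delta_1^2)$, and choosing $\delta_1$ small enough absorbs the quadratic remainder to conclude $A(u_0)\geq b_2\delta_1$. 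The main point—and the reason the particular direction $v$ is selected—is precisely that the $M$-linear term must cancel while the $P$-linear term stays strictly positive; both facts rest on the relations $P(\phi_{\omega,c})=a_0M(\phi_{\omega,c})$ and $\omega>a_0^2$ from Lemma \ref{Apendix}, together with the borderline constraint $c<2\sqrt\omega$. No genuine obstacle remains beyond verifying these sign conditions.
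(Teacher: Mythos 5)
Your proof is correct and follows essentially the same route as the paper: Taylor expand $M$, $P$, $S_{\omega,c}$ at $\phi_{\omega,c}$ in the direction $v=-a_0\phi_{\omega,c}+i\partial_x\phi_{\omega,c}$, use $S'_{\omega,c}(\phi_{\omega,c})=0$ to kill the $S_{\omega,c}$-term, and use $P(\phi_{\omega,c})=a_0M(\phi_{\omega,c})$ together with $\omega-a_0^2=\sigma(2-\sigma)\omega>0$ and $4\sqrt\omega-2c>0$ to see that only the strictly positive $P$-term survives at order $\delta_1$. Your bookkeeping of the final constant (retaining the factor $1-z_0$) is in fact slightly more careful than the paper's.
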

\begin{proof}
Recalling that $c=2z_0\sqrt\omega$, $a_0=(\sigma-1)\sqrt\omega$ and the choose of $u_0$, we have
\begin{align}
M(u_0)-M(\phi_{\omega, c})=&\delta_1\langle M'(\phi_{\omega, c}), -a_0\phi_{\omega, c}+i\partial_x\phi_{\omega, c}\rangle+o(\delta_1)\nonumber\\
=&\delta_1[-2a_0M(\phi_{\omega,c})+2P(\phi_{\omega,c})]+o(\delta_1)\nonumber\\
=&o(\delta_1)\label{M:u_0-phi},
\end{align}
and
\begin{align}
P(u_0)-P(\phi_{\omega, c})=&\delta_1\langle P'(\phi_{\omega, c}), -a_0\phi_{\omega, c}+i\partial_x\phi_{\omega, c}\rangle+o(\delta_1)\nonumber\\
=&2(\omega-a_0^2)M(\phi_{\omega, c})\delta_1+o(\delta_1)\nonumber\\
=&2\omega\sigma(2-\sigma) M(\phi_{\omega, c})\delta_1+o(\delta_1)\label{P:u_0-phi}.
\end{align}
Moreover, using $S'_{\omega,c}(\phi_{\omega,c})=0$, we get
\begin{align}
S_{\omega,c}(u_0)-S_{\omega,c}(\phi_{\omega, c})=&\delta_1\langle S'_{\omega,c}(\phi_{\omega,c}), -a_0\phi_{\omega, c}+i\partial_x\phi_{\omega, c}\rangle+o(\delta_1)\nonumber\\
=&o(\delta_1)\label{S:u_0-phi}.
\end{align}
Now, we collect the above computations and obtain
\begin{align*}
A(u_0)=&(2c\sqrt\omega+4\omega)\cdot o(\delta_1)+(4\sqrt\omega-2c)\cdot 2\omega\sigma(2-\sigma)M(\phi_{\omega, c})\delta_1+o(\delta_1)-4\cdot o(\delta_1)\\
=&8\omega\sqrt\omega\,\sigma(2-\sigma)M(\phi_{\omega,c})\delta_1+o(\delta_1)\\
\geq&b_2\delta_1,
\end{align*}	
where choose $b_2=4\omega\sqrt\omega\,\sigma(2-\sigma)M(\phi_{\omega,c})>0$. This proves the lemma.
\end{proof}

We further give the estimate on $\|\varepsilon\|_{\H1}^2$.
\begin{lem}\label{eee}
Let $\varepsilon$ be defined in \eqref{modulation-u}, there exists $b_3>0$,  then
$$\|\varepsilon\|_{\H1}^2\leq b_3\lambda\delta_1.$$
\end{lem}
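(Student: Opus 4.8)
First I would convert the coercivity bound \eqref{coercivity} into a bound on a single scalar. Write $\omega'=\omega+\lambda\mu$, $c'=c+\lambda\nu$. Since $E,M,P$—and hence $S_{\omega',c'}$—are invariant under the phase rotation and translation in \eqref{*}, we have $S_{\omega',c'}(u)=S_{\omega',c'}(\phi_{\omega',c'}+\varepsilon)$; and since $\phi_{\omega',c'}$ solves \eqref{Elliptic-comp} for $(\omega',c')$, we have $S'_{\omega',c'}(\phi_{\omega',c'})=0$. Taylor expanding,
$$\langle S''_{\omega',c'}(\phi_{\omega',c'})\varepsilon,\varepsilon\rangle=2\big[S_{\omega',c'}(u)-S_{\omega',c'}(\phi_{\omega',c'})\big]+o(\|\varepsilon\|_{\H1}^2),$$
so \eqref{coercivity} (absorbing the error for small $\|\varepsilon\|_{\H1}$) yields $\|\varepsilon\|_{\H1}^2\lesssim S_{\omega',c'}(u)-S_{\omega',c'}(\phi_{\omega',c'})$. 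This reduces the lemma to estimating one scalar difference.

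Next I would evaluate that difference with the conservation laws. Replacing $E(u),M(u),P(u)$ by the conserved $E(u_0),M(u_0),P(u_0)$ makes $S_{\omega',c'}(u)=S_{\omega,c}(u_0)+\lambda Q_{\mu,\nu}(u_0)$ exactly affine in $\lambda$. For the soliton term I Taylor expand $d(\omega',c')=S_{\omega',c'}(\phi_{\omega',c'})$ about $(\omega,c)$: the first-order term is $\lambda Q_{\mu,\nu}(\phi_{\omega,c})$, and the crucial cancellation is that the second-order term $\tfrac{\lambda^2}{2}\langle d''(\omega,c)(\mu,\nu),(\mu,\nu)\rangle$ vanishes because $(\mu,\nu)$ is the null eigenvector of $d''(\omega,c)$, which is exactly \eqref{0}. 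Hence
$$S_{\omega',c'}(u)-S_{\omega',c'}(\phi_{\omega',c'})=\big[S_{\omega,c}(u_0)-S_{\omega,c}(\phi_{\omega,c})\big]+\lambda\big[Q_{\mu,\nu}(u_0)-Q_{\mu,\nu}(\phi_{\omega,c})\big]+O(\lambda^3).$$

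I would then insert the $\delta_1$-expansions at $u_0=\phi_{\omega,c}+\delta_1(-a_0\phi_{\omega,c}+i\partial_x\phi_{\omega,c})$. The linear-in-$\delta_1$ parts of $M(u_0)-M(\phi_{\omega,c})$ and of $S_{\omega,c}(u_0)-S_{\omega,c}(\phi_{\omega,c})$ vanish—this is the content of \eqref{M:u_0-phi} and \eqref{S:u_0-phi}, resting on $P(\phi_{\omega,c})=a_0M(\phi_{\omega,c})$ and $S'_{\omega,c}(\phi_{\omega,c})=0$—so both are $O(\delta_1^2)$, whereas \eqref{P:u_0-phi} gives $P(u_0)-P(\phi_{\omega,c})=p_1\delta_1+O(\delta_1^2)$ with $p_1=2\omega\sigma(2-\sigma)M(\phi_{\omega,c})>0$. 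Therefore $Q_{\mu,\nu}(u_0)-Q_{\mu,\nu}(\phi_{\omega,c})=\nu p_1\delta_1+O(\delta_1^2)$, and collecting terms gives
$$\|\varepsilon\|_{\H1}^2\lesssim \delta_1^2+\lambda\delta_1+\lambda^3.$$

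The hard part is the last step: absorbing the genuinely quadratic piece $\delta_1^2$ (from $S_{\omega,c}(u_0)-S_{\omega,c}(\phi_{\omega,c})$ and $M(u_0)-M(\phi_{\omega,c})$, which I do not expect to vanish) and the cubic $\lambda^3$ into $\lambda\delta_1$, which requires the comparability $\delta_1\lesssim\lambda$ together with $\lambda^2\lesssim\delta_1$. The lower bound $\lambda\gtrsim\delta_1$ I would extract from the modulation at $t=0$: since $\langle i\phi_{\omega,c},J'(\phi_{\omega,c})\rangle=\langle\partial_x\phi_{\omega,c},J'(\phi_{\omega,c})\rangle=0$ (the vanishing first two entries of the third row of $D\overrightarrow F$), the orthogonality $\langle\varepsilon(0),J'(\phi_{\omega,c})\rangle=0$ forces $\lambda(0)=\delta_1\,\langle -a_0\phi_{\omega,c}+i\partial_x\phi_{\omega,c},\,J'(\phi_{\omega,c})\rangle / \langle\psi,J'(\phi_{\omega,c})\rangle$, which is $\sim\delta_1$ (nonzero by Lemma \ref{J'psi}) and of positive sign for this choice of $u_0$; the smallness $\lambda\ll1$ is the standing assumption, and $\dot\lambda=O(\|\varepsilon\|_{\H1})$ from Lemma \ref{doty} controls the interval on which $\lambda^2\lesssim\delta_1$ persists. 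Making this relation between $\lambda(t)$ and $\delta_1$ precise—presumably via a bootstrap—is the delicate point of the whole argument.
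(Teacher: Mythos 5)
Your proposal is correct and follows essentially the same route as the paper's proof: coercivity \eqref{coercivity} plus conservation of $S_{\omega+\lambda\mu,c+\lambda\nu}$, the degeneracy in the direction $(\mu,\nu)$ (the paper phrases it as $\langle S''_{\omega,c}(\phi_{\omega,c})\psi,\psi\rangle=0$ rather than via $d''$, which is equivalent) to kill the $\lambda^2$ term, and the expansions \eqref{M:u_0-phi}--\eqref{S:u_0-phi} to isolate the leading $\lambda\delta_1$ contribution. The one place you go beyond the paper is the final absorption step: the paper simply writes the residual as $o(\delta_1)+o(\lambda^2)$ and drops it, whereas you correctly observe that a literal bound by $b_3\lambda\delta_1$ requires $\delta_1\lesssim\lambda$ and $\lambda^2\lesssim\delta_1$ (and your computation of $\lambda(0)\sim\delta_1$ from the third orthogonality condition is the right starting point for that); note, however, that the weaker estimate $\|\varepsilon\|_{H^1}^2\lesssim\lambda\delta_1+\delta_1^2+\lambda^3$, which you obtain without any bootstrap, already suffices for the use made of this lemma in the proof of Theorem \ref{thm:mainthm}, since there one only needs $O(\|\varepsilon\|_{H^1}^2)$ to be absorbable into $\tfrac12 b_2\delta_1+\tfrac12 b_1\lambda^2$.
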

\begin{proof} Without loss of generality, we may assume that $\nu>0$.
From the conservation laws, we have
\begin{align*}
S_{\omega+\lambda\mu, c+\lambda\nu}(u_0)=&S_{\omega+\lambda\mu, c+\lambda\nu}(u)\\
=&S_{\omega+\lambda\mu, c+\lambda\nu}(\phi_{\omega+\lambda\mu, c+\lambda\nu})
+\frac 12\big\langle S''_{\omega+\lambda\mu, c+\lambda\nu}(\phi_{\omega+\lambda\mu, c+\lambda\nu})\varepsilon, \varepsilon\big\rangle
+o(\|\varepsilon\|_{\H1}^2).
\end{align*}
Combining \eqref{S'wc} and Lemma \ref{critical} yields
\begin{align*}
S_{\omega+\lambda\mu, c+\lambda\nu}(\phi_{\omega+\lambda\mu, c+\lambda\nu})
=&S_{\omega+\lambda\mu, c+\lambda\nu}(\phi_{\omega, c})
+\lambda\big\langle S'_{\omega+\lambda\mu, c+\lambda\nu}(\phi_{\omega, c}), \psi\big\rangle\\
&+\frac 12\lambda^2\big\langle S''_{\omega+\lambda\mu, c+\lambda\nu}(\phi_{\omega, c})\psi, \psi\big\rangle+o(\lambda^2)\\
=&S_{\omega+\lambda\mu, c+\lambda\nu}(\phi_{\omega, c})
+\lambda\langle S'_{\omega, c}(\phi_{\omega, c}), \psi\rangle+\lambda\mu\langle M'(\phi_{\omega,c}), \psi\rangle\\
&+\lambda\nu\langle P'(\phi_{\omega,c}), \psi\rangle
+\frac 12\lambda^2\big\langle S''_{\omega, c}(\phi_{\omega, c})\psi, \psi\big\rangle+o(\lambda^2)\\
=&S_{\omega+\lambda\mu, c+\lambda\nu}(\phi_{\omega, c})+o(\lambda^2).
\end{align*}
Then, we have
\begin{align*}
S_{\omega+\lambda\mu, c+\lambda\nu}(u_0)
=S_{\omega+\lambda\mu, c+\lambda\nu}(\phi_{\omega, c})
+\frac 12\big\langle S''_{\omega+\lambda\mu, c+\lambda\nu}(\phi_{\omega+\lambda\mu, c+\lambda\nu})\varepsilon, \varepsilon\big\rangle
+o(\lambda^2+\|\varepsilon\|_{\H1}^2).
\end{align*}
Together with \eqref{M:u_0-phi}, \eqref{P:u_0-phi} and \eqref{S:u_0-phi}, we have
\begin{align}
S_{\omega+\lambda\mu, c+\lambda\nu}(u_0)-&S_{\omega+\lambda\mu, c+\lambda\nu}(\phi_{\omega, c})\nonumber\\
=&S_{\omega,c}(u_0)-S_{\omega,c}(\phi_{\omega,c})+\lambda\mu(M(u_0)-M(\phi_{\omega,c}))+\lambda\nu(P(u_0)-P(\phi_{\omega,c}))\nonumber\\
=&2\nu\omega\sigma(2-\sigma)M(\phi_{\omega,c})\lambda\delta_1+o(\delta_1)\label{SSS}.
\end{align}
Therefore, by \eqref{coercivity}, \eqref{dotl} and \eqref{SSS}, there exists $C>0$, such that
\begin{align*}
\|\varepsilon\|_{\H1}^2\le &
C\big\langle S''_{\omega+\lambda\mu, c+\lambda\nu}(\phi_{\omega+\lambda\mu, c+\lambda\nu})\varepsilon, \varepsilon\big\rangle\\
=&C\big[S_{\omega+\lambda\mu, c+\lambda\nu}(u_0)-S_{\omega+\lambda\mu, c+\lambda\nu}(\phi_{\omega, c})\big]+o(\lambda^2+\|\varepsilon\|_{\H1}^2)\\
=&2C\nu\omega\sigma(2-\sigma)M(\phi_{\omega,c})\lambda\delta_1+o(\delta_1)+o(\lambda^2+\|\varepsilon\|_{\H1}^2)\\
\leq& 2b_3\lambda\delta_1+o(\|\varepsilon\|_{\H1}^2),
\end{align*}
where $b_3=2C\nu\omega\sigma(2-\sigma)M(\phi_{\omega, c})>0$.
Then we obtain
$$\|\varepsilon\|_{\H1}^2\leq b_3\lambda\delta_1.$$
This completes the proof.
\end{proof}

\begin{proof}[Proof of Theorem \ref{thm:mainthm}]
On the one hand, we note that from the definition of $I(t)$, we have the time uniform boundedness of $I(t)$. That is, if $|\lambda|\lesssim 1$, then 
\begin{align}\label{IB}
\sup_{t\in\R} I(t)\lesssim R(\|\phi_{\omega,c}\|_{\H1}^2+1).
\end{align}
On the other hand, using \eqref{dotl} and Lemmas \ref{lem:I'}, \ref{A(u0)}, we get
\begin{align*}
I'(t)=&A(u_0)+B(\lambda)+O\big(\lambda\|\varepsilon\|_{\H1}+\|\varepsilon\|_{\H1}^2+\frac 1R\big)+o(\lambda^2)\\
\geq&b_2\delta_1+b_1\lambda^2+O\big(\|\varepsilon\|_{\H1}^2+\frac 1R\big)+o(\lambda^2)\\
\geq&\frac 12 b_2\delta_1+\frac 12 b_1\lambda^2+O(\|\varepsilon\|_{\H1}^2),
\end{align*}
where choosing $R\geq 10(b_2\delta_1)^{-1}$.

Moreover, combining Lemme \ref{eee} yields
\begin{align*}
I'(t)\geq\frac 14 b_2\delta_1+\frac 12b_1\lambda^2>0,
\end{align*}
when $|\lambda|\ll1$.

This implies that $I(t)\rightarrow +\infty$ when $t\rightarrow +\infty$, which is contradicted with \eqref{IB}. Hence we prove the instablility of the solitary wave solutions $e^{i\omega t}\phi_{\omega,c}(x-ct)$ of \eqref{eqs:gDNLS}. This completes the proof of Theorem \ref{thm:mainthm}.
\end{proof}

\section*{Appendix: Proof of  Lemma \ref{Apendix1} and Lemma \ref{Apendix}}
Throughout this section, let $1<\sigma<2$ and $z_{0}=z_{0}(\sigma) \in(-1,1)$ satisfy $F_\sigma(z_0)=0$.

Now we adopt some notations from \cite{LiSiSu1}. More precisely, for any $(\omega,c)\in \R^2$ satisfying $c^2<4\omega$, we denote
\begin{align*}
&\kappa=\sqrt{4\omega-c^2}>0, \quad
\tilde\kappa=2^{\frac 1\sigma-2}\sigma^{-1}(1+\sigma)^{\frac 1\sigma}\kappa^{\frac 2\sigma-2}\omega^{-\frac{1}{2\sigma}-\frac 12},\quad
f(\omega,c)=\frac{(\sigma+1)\kappa^2}{2\sqrt{\omega}}, \\
&h(x; \sigma; \omega, c)=\cosh(\sigma\kappa x)-\frac{c}{2\sqrt{\omega}}, \quad \quad \quad
\alpha_n=\int_{0}^{\infty} h^{-{\frac{1}{\sigma}}-n}dx,\ n\in\Z^+.
\end{align*}

\begin{proof}[Proof of Lemma \ref{Apendix1}]
For any $(\omega,c)\in \R^2$ satisfy $c^2<4\omega$, by \eqref{phi}, we have
$$\partial_x\phi_{\omega,c}(x)=e^{i\theta}\big[(i\frac c2-\frac{i}{2\sigma+2}\varphi_{\omega,c}^{2\sigma})\varphi_{\omega,c}+\partial_x\varphi_{\omega,c}\big].$$
Therefore, we have
\begin{align*}
P(\phi_{\omega,c})=&\frac 12 \mbox{Im}\int_{\mathbb{R}}\phi_{\omega,c}\overline{\partial_x\phi_{\omega,c}} dx\\
=&\frac 12 \mbox{Im}\int_{\mathbb{R}}-\big[i\frac c2-\frac{i}{2\sigma+2}\varphi_{\omega,c}^{2\sigma}\big]\varphi_{\omega,c}^2 dx\\
=&-\frac c4\|\varphi_{\omega,c}\|_{L^2}^2+\frac{1}{4(\sigma+1)}\|\varphi_{\omega,c}\|_{L^{2\sigma+2}}^{2\sigma+2}.
\end{align*}
Finally, we obtain
\begin{align*}
\|\phi_{\omega,c}\|_{L^{2\sigma+2}}^{2\sigma+2}=4(\sigma+1)\big[\frac c2M(\phi_{\omega,c})+P(\phi_{\omega,c})\big].
\end{align*}
According to  \cite{LiSiSu1} Appendix Lemma A.3, we have that
$$\partial_cM(\phi_{\omega,c})=\partial_\omega P(\phi_{\omega,c}), \quad   \partial_cP(\phi_{\omega,c})=\omega\partial_\omega M(\phi_{\omega,c}).$$
This completes the proof.
\end{proof}

Now, we focus on the critical case $c=2z_0\sqrt\omega$.
\begin{proof}[Proof of Lemma \ref{Apendix}]
From \cite{LiSiSu1} Lemma 4.2, $\det[d''(\omega,c)]=0$ is equivalent to
$$\big[{(\sigma-1)}\sqrt{\omega}M(\phi_{\omega,c})\big]^2=P(\phi_{\omega,c})^2.$$
When $c=2z_0\sqrt\omega$, we have $P(\phi_{\omega,c})>0$.
Indeed, if $P(\phi_{\omega,c})<0$, since $\frac{P(\phi_{\omega,c})}{M(\phi_{\omega,c})}\rightarrow +\infty$, where  $c\rightarrow-2\sqrt{\omega}$. Then there exist two solutions $c_1=c_1(\sqrt{\omega})$, $c_2=c_2(\sqrt{\omega})$, such that
$$\big|\frac{P(\phi_{\omega,c})}{M(\phi_{\omega,c})}\big|=(\sigma-1)\sqrt{\omega}.$$
This contradicts the fact that  $z_0$ is the unique solution of $\det[d''(\omega,c)]=0$. Hence,
$$P(\phi_{\omega,c})>0, \quad P(\phi_{\omega,c})=(\sigma-1)\sqrt\omega M(\phi_{\omega,c}).$$

From \cite{LiSiSu1} Appendix (A.2) Lemmas A.1 and A.2, we know that $M(\phi_{\omega,c})=f^{\frac 1\sigma}\alpha_0$ and
$P(\phi_{\omega,c})
=\frac{1}{4\sqrt\omega}f^{\frac 1\sigma}(-2\omega^{\frac 12}c\alpha_0+\kappa^2\alpha_1).$
Since  $P(\phi_{\omega,c})>0$, then we have
$$\kappa^2\alpha_1>2\omega^{\frac 12}c\alpha_0.$$
Together with \cite{LiSiSu1} Appendix Lemma A.3, we obtain
\begin{align*}
\partial_\omega M(\phi_{\omega,c})
=&\tilde{\kappa}\omega^{-1}[-8(\sigma-1)\omega^{\frac 23}\alpha_0+c(2\omega^{\frac 1 2}c\alpha_0-\kappa^2\alpha_1)]<0,\\
\partial_\omega P(\phi_{\omega,c})=&2\tilde{\kappa}[2c\,\omega^{\frac 12}\alpha_0(\sigma-1)-2c\,\omega^{\frac 12}\alpha_0+\kappa^2\alpha_1]>0.
\end{align*}
On the one hand, by $\partial_cP(\phi_{\omega,c})=\omega\partial_\omega M(\phi_{\omega,c})$ and $ \partial_\omega P(\phi_{\omega,c})=\partial_c M(\phi_{\omega,c})$, we have
$$\frac{\mu}{\nu}=-\frac{\partial_c M(\phi_{\omega,c})}{\partial_\omega M(\phi_{\omega,c})}=-\frac{\partial_\omega P(\phi_{\omega,c})}{\partial_\omega M(\phi_{\omega,c})}>0.$$
On the other hand,
$$\frac \mu\nu=-\frac{\partial_c P(\phi_{\omega,c})}{\partial_\omega P(\phi_{\omega,c})}=-\frac{\omega\partial_\omega M(\phi_{\omega,c})}{\partial_\omega P(\phi_{\omega,c})}.$$
Hence, combining with above, we get
$$\big(\frac{\mu}{\nu}\big)^2=-\frac{\partial_\omega P(\phi_{\omega,c})}{\partial_\omega M(\phi_{\omega,c})}\cdot -\frac{\omega\partial_\omega M(\phi_{\omega,c})}{\partial_\omega P(\phi_{\omega,c})}=\omega.$$
Then we obtain
$$\frac\mu\nu=\sqrt{\omega}.$$

Differentiating $M(\phi_{\omega,c})$ and $P(\phi_{\omega,c})$ with respect to $\omega$ and $c$, we have the following relations:
\begin{align*}
\partial_{\omega \omega}M(\phi_{\omega,c})=&\frac 1\omega \big(\partial_{\omega c}P(\phi_{\omega,c})-\partial_\omega M(\phi_{\omega,c})\big),\quad \quad
\partial_{\omega c}M(\phi_{\omega,c})=\partial_{\omega \omega}P(\phi_{\omega,c}),\\
\partial_{c c}M(\phi_{\omega,c})=&\partial_{\omega c}P(\phi_{\omega,c}),\quad \quad \quad\quad\quad\quad\quad
\partial_{c c}P(\phi_{\omega,c})=\omega \partial_{\omega \omega}P(\phi_{\omega,c}).
\end{align*}
Since $\partial_\omega P(\phi_{\omega,c})=\sqrt\omega\partial_\omega M(\phi_{\omega,c})$, we obtain $(z_0^2-1)\alpha_1=(1-z_0-\sigma)\alpha_0$.  From \cite{Fu-16-DNLS} Appendix Lemma 10, we have
\begin{align*}
\partial_\omega M(\phi_{\omega,c})
=8\sqrt\omega\tilde\kappa\alpha_0(z_0^2-\sigma+1)+8\sqrt\omega\tilde\kappa\alpha_1z_0(z_0^2-1)
=8\sqrt\omega\tilde\kappa\alpha_0(1-\sigma)(1+z_0),
\end{align*}
$$2\sqrt\omega\partial_{\omega \omega}P(\phi_{\omega,c})+2\partial_{\omega c}P(\phi_{\omega,c})-\frac 12\partial_\omega M(\phi_{\omega,c})=-4\sqrt\omega\tilde\kappa \alpha_0(\sigma-1)(1-z_0),$$
and
\begin{align*}
\mu^2\partial_{\omega \omega}P(\phi_{\omega,c})+2\mu\nu\partial_{\omega c}P(\phi_{\omega,c})+\nu^2\partial_{c c}P(\phi_{\omega,c})
=-8\nu^2\omega\tilde{\kappa}_\omega\alpha_0(\sigma-1).
\end{align*}
Moreover, we have
\begin{align*}
\mu^2\partial_{\omega \omega}&M(\phi_{\omega,c})+2\mu\nu\partial_{\omega c}M(\phi_{\omega,c})+\nu^2\partial_{c c}M(\phi_{\omega,c})\\
=&\nu^2\big[\omega \partial_{\omega \omega}M(\phi_{\omega,c})+2\sqrt{\omega}\partial_{\omega c}M(\phi_{\omega,c})+\partial_{c c}M(\phi_{\omega,c})\big]\\
=&\nu^2\big[-\partial_\omega M(\phi_{\omega,c})+ 2\partial_{\omega c}P(\phi_{\omega,c})+2\sqrt{\omega}\partial_{\omega  \omega}P(\phi_{\omega,c})\big]\\
=&\nu^2[2\sqrt\omega\partial_{\omega \omega}P(\phi_{\omega,c})+2\partial_{\omega c}P(\phi_{\omega,c})-\frac 12\partial_\omega M(\phi_{\omega,c})-\frac 12\partial_\omega M(\phi_{\omega,c})]\\
=&\nu^2[-4\sqrt\omega\tilde\kappa \alpha_0(\sigma-1)(1-z_0)+4\sqrt\omega\tilde{\kappa}_\omega\alpha_0(1-\sigma)(z_0+1)]\\
=&8\nu^2\sqrt\omega\tilde\kappa\alpha_0(\sigma-1)z_0.
\end{align*}
Take $\kappa_0=8\nu^2\sqrt\omega\tilde\kappa_\omega\alpha_0(\sigma-1)$, then $\kappa_0>0$. 
This concludes the proof of Lemma \ref{Apendix}.
\end{proof}

\end{document}